\documentclass[12pt,reqno]{amsart}

\usepackage{amssymb, amsthm, amsmath}
\usepackage[pdftex]{graphicx}
\usepackage[single]{accents}

\newtheorem{theorem}{Theorem}[section]
\newtheorem{lemma}[theorem]{Lemma}
\newtheorem{prop}[theorem]{Proposition}
\newtheorem{corollary}[theorem]{Corollary}

\theoremstyle{definition}

\theoremstyle{remark}
\newtheorem{remark}[theorem]{Remark}

\newcommand{\mcc}{{\raise 0.45ex \hbox{c}}}

\numberwithin{equation}{section}

\newcommand{\D}{\mathbb{D}}

\newcommand{\C}{\mathbb{C}}
\newcommand{\T}{\mathbb{T}}

\newcommand{\ip}[2]{\langle #1, #2 \rangle}
\newcommand{\mcH}{\mathcal{H}}
\newcommand{\mcO}{\mathcal{O}}

\newcommand{\tl}[1]{\accentset{\leftarrow}{#1}}

\newcommand{\clspan}{\overline{\mathrm{span}}}
\newcommand{\Span}{\mathrm{span}}
\newcommand{\bshilb}{L^2(1/|p_{n,m}|^2 d\sigma)}

\title[Bivariate Bernstein-Szeg\H{o} measures]{Orthogonality
  relations for bivariate Bernstein-Szeg\H{o} measures}
\date{November 22, 2011} 
\author[J.~Geronimo]{Jeffrey S. Geronimo}
\thanks{JSG is supported in part by Simons Foundation Grant \#210169.}
\address{JSG, School of Mathematics, Georgia Institute of Technology,
Atlanta, GA 30332--0160, USA}
\email{geronimo@math.gatech.edu}
\author[P.~Iliev]{Plamen Iliev}
\thanks{PI is supported in part by NSF Grant \#0901092}
\address{PI, School of Mathematics, Georgia Institute of Technology,
Atlanta, GA 30332--0160, USA}
\email{iliev@math.gatech.edu}
\author[G.~Knese]{Greg Knese}
\thanks{GK is supported in part by NSF Grant \#1048775}
\address{GK, Department of Mathematics, University of Alabama, Box 870350, Tuscaloosa, AL 35487-0350, USA}
\email{geknese@bama.ua.edu}
\dedicatory{To Francisco~Marcell\'an on the occasion of his 60th birthday}
\keywords{Bivariate measures, Bernstein-Szeg\H o, Christoffel-Darboux, reproducing kernel}
\subjclass[2010]{42C05, 30E05, 47A57}
\begin{document}

\begin{abstract}
The orthogonality properties of certain subspaces associated with bivariate 
Bernstein-Szeg\H o measures are considered. It is shown that these spaces 
satisfy more orthogonality relations than expected from the relations that 
define them. The results are used to prove a Christoffel-Darboux like formula 
for these measures.
\end{abstract}
\maketitle
\section{Introduction}
In the study of bivariate polynomials orthogonal on the bi-circle
progress has recently been made in understanding these polynomials in
the case when the orthogonality measure is purely absolutely
continuous with respect to Lebesgue measure of the form
$$d\mu=\frac{d\sigma}{|p_{n,m}(e^{i\theta},e^{i\phi})|^2},$$ 
where $p_{n,m}(z,w)$ is of degree $n$ in $z$ and $m$ in $w$ and is
stable i.e. is nonzero for $|z|,|w|\le 1$ and $d\sigma$ is the normalized Lebesgue measure
on the torus $\T^2$.  Such measures have come to
be called Bernstein-Szeg\H o measures and they played an important
role in the extension of the Fej\'er-Riesz factorization lemma to two
variables \cite{GWa}, \cite{GWb}, \cite{Ka}, \cite{Kb}.  In particular
in order to determine whether a positive trigonometric polynomial can
be factored as a magnitude square of a stable polynomial an important
role was played by a bivariate analog of the Christoffel-Darboux
formula. The derivation of this formula was non trivial even if one
begins with the stable polynomial $p_{n,m}$, \cite{GWa}, \cite{GKVW},
\cite{Ka}, \cite{Wo}.  This formula was shown to be a special case of
the formula derived by Cole and Wermer \cite{Ka} through operator
theoretic methods. Here we give an alternative derivation of the
Christoffel-Darboux formula beginning with the stable polynomial
$p_{n,m}$. This is accomplished by examining the orthogonality
properties of the polynomial $p_{n,m}$ in the space
$L^{2}(d\mu)$. These orthogonality properties imbue certain subspaces
of $L^{2}(d\mu)$ with many more orthogonality relations than would
appear by just examining the defining relations for these spaces.

We proceed as follows. In section 2 we introduce the notation to be
used throughout the paper and examine the orthogonality properties of
the stable polynomial $p_{n,m}$ in the space $L^{2}(d\mu)$.  We also
list the properties of a sequence of polynomials closely associated
with $p_{n,m}$. In section 3 we state, and in section 4, prove, one of
the main results of the paper on the orthogonality of certain
subspaces of $L^{2}(d\mu)$. We also establish several follow-up
results which are then used in section 5 to derive the
Christoffel-Darboux formula. The proof is reminiscent of that given in
\cite{GKVW} and \cite{Kc}. In section 6, we study connections to the
parametric moment problem.

\section{Preliminaries}
Let $p_{n,m} \in \C[z,w]$ be stable with degree $n$ in $z$ and $m$ in
$w$.  We will frequently use the following partial
order on pairs of integers:
\[
(k,l) \leq (i,j) \text{ iff } k \leq i \text{ and } l\le j.
\]
%% \[
%% (k,l) < (i,j) \text{ iff }   k < i \text{ and } l< j.
%% \]
The notations $\nleq, \ngeq$ refer to the negations of the above
partial order.
Define 
\[
\tl{p}_{n,m}(z,w) = z^nw^m \overline{p_{n,m}(1/\bar{z},1/\bar{w})}.
\]

When we refer to ``orthogonalities,'' we shall always mean
orthogonalities in the inner product $\ip{\cdot}{\cdot}$ of the
Hilbert space $\bshilb$ on $\T^2$.  Notice that $\bshilb$ is
topologically isomorphic to $L^2(\T^2)$ but we use the different
geometry to study $p_{n,m}$.

The polynomial $p_{m,n}$ is orthogonal to more monomials than the one
variable theory might initially suggest. More precisely,

\begin{lemma} \label{porth} 
In $L^2(1/|p_{n,m}|^2 d\sigma)$, $p_{n,m}$ is orthogonal to the set
\[
\{ z^i w^j: (i,j) \nleq (0,0) \}
\]
 and $\tl{p}_{n,m}$ is orthogonal
to the set
\[
\{ z^i w^j : (i,j) \ngeq (n,m) \}.
\]
\end{lemma}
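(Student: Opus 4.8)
\emph{Proof strategy.} The plan is to reduce both parts to the same short computation. First I would record the two torus identities that make the weight $1/|p_{n,m}|^2$ cooperate: for $(z,w)\in\T^2$ we have $\bar z=1/z$, $\bar w=1/w$, hence $\overline{z^iw^j}=z^{-i}w^{-j}$, and directly from the definition of $\tl p_{n,m}$,
\[
\tl p_{n,m}(z,w)=z^nw^m\,\overline{p_{n,m}(z,w)}\qquad\text{on }\T^2 .
\]
Using these I would rewrite each inner product as the integral over $\T^2$ of a single monomial divided by one copy of $p_{n,m}$. The one analytic ingredient I need is that, since $p_{n,m}$ is stable, it is zero-free on a polydisk strictly larger than $\cD\times\cD$ (compactness), so $1/p_{n,m}$ is holomorphic there and has a power series $1/p_{n,m}(z,w)=\sum_{a,b\ge0}e_{a,b}z^aw^b$ converging uniformly on $\cD\times\cD$, hence on $\T^2$. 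After that I would just integrate term by term and read off which monomial, if any, survives.

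Concretely, for $p_{n,m}$ I would compute, using $\overline{z^iw^j}=z^{-i}w^{-j}$ and then conjugating the whole integral,
\[
\ip{p_{n,m}}{z^iw^j}=\int_{\T^2}\frac{\overline{z^iw^j}}{\overline{p_{n,m}}}\,d\sigma=\overline{\int_{\T^2}\frac{z^iw^j}{p_{n,m}}\,d\sigma}=\overline{\sum_{a,b\ge0}e_{a,b}\int_{\T^2}z^{i+a}w^{j+b}\,d\sigma}.
\]
Since $\int_{\T^2}z^{i+a}w^{j+b}\,d\sigma$ is $1$ when $i+a=j+b=0$ and $0$ otherwise, and such $a,b\ge0$ exist only if $i\le0$ and $j\le0$, the sum is empty whenever $(i,j)\nleq(0,0)$; so $\ip{p_{n,m}}{z^iw^j}=0$ there, as claimed.

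For $\tl p_{n,m}$ I would use the second torus identity to simplify the integrand, $\tl p_{n,m}/|p_{n,m}|^2=z^nw^m/p_{n,m}$ on $\T^2$, giving
\[
\ip{\tl p_{n,m}}{z^iw^j}=\int_{\T^2}\frac{z^{n-i}w^{m-j}}{p_{n,m}}\,d\sigma=\sum_{a,b\ge0}e_{a,b}\int_{\T^2}z^{n-i+a}w^{m-j+b}\,d\sigma .
\]
A nonzero term now requires $n-i+a=m-j+b=0$ with $a,b\ge0$, i.e. $i\ge n$ and $j\ge m$; hence $\ip{\tl p_{n,m}}{z^iw^j}=0$ whenever $(i,j)\ngeq(n,m)$.

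I do not expect a serious obstacle: the computations are routine once the two reductions are in place. The one point that needs care is the first identity — one must \emph{not} try to expand $1/\tl p_{n,m}$ near the origin, because $\tl p_{n,m}$ need not be zero-free on $\cD\times\cD$; conjugating the integral instead restores a genuine copy of $p_{n,m}$, which is holomorphic and nonvanishing there. The only other thing to justify is the (standard) interchange of summation and integration, which follows from uniform convergence of the power series on $\T^2$.
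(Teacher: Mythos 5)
Your proof is correct and follows essentially the same route as the paper: both reduce each inner product to an integral of a monomial against $1/p_{n,m}$, which is holomorphic past the closed bidisk by stability, and conclude that only nonnegative-power terms can contribute. The paper invokes the mean value property where you expand $1/p_{n,m}$ in its power series and integrate term by term, and it handles $\tl{p}_{n,m}$ by the symmetry $\ip{z^iw^j}{\tl{p}_{n,m}} = \ip{p_{n,m}}{z^{n-i}w^{m-j}}$ rather than your direct computation, but these are cosmetic differences.
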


\begin{proof} Observe that since $1/p_{n,m}$ is holomorphic in $\overline{\D^2}$
\[
\begin{aligned}
\ip{z^i w^j}{p_{n,m}} &= \int_{\T^{2}} z^i w^j \overline{p_{n,m}(z,w)}
\frac{d\sigma}{|p_{n,m}(z,w)|^2} \\ &= \int_{\T^{2}} \frac{z^i w^j}{p_{n,m}(z,w)}
d\sigma = 0 \text{ if } (i,j) \nleq (0,0)
\end{aligned}
\]
by the mean value property (either integrating first with respect to
$z$ or $w$ depending on whether $i>0$ or $j>0$).  The claim about
$\tl{p}_{n,m}$ follows from the observation $\ip{z^iw^j}{\tl{p}_{n,m}}
= \ip{p_{n,m}}{z^{n-i}w^{m-j}}$.
\end{proof}

\begin{figure} 
\caption{Orthogonalities of $p_{n,m}$}
\scalebox{0.6}{\includegraphics{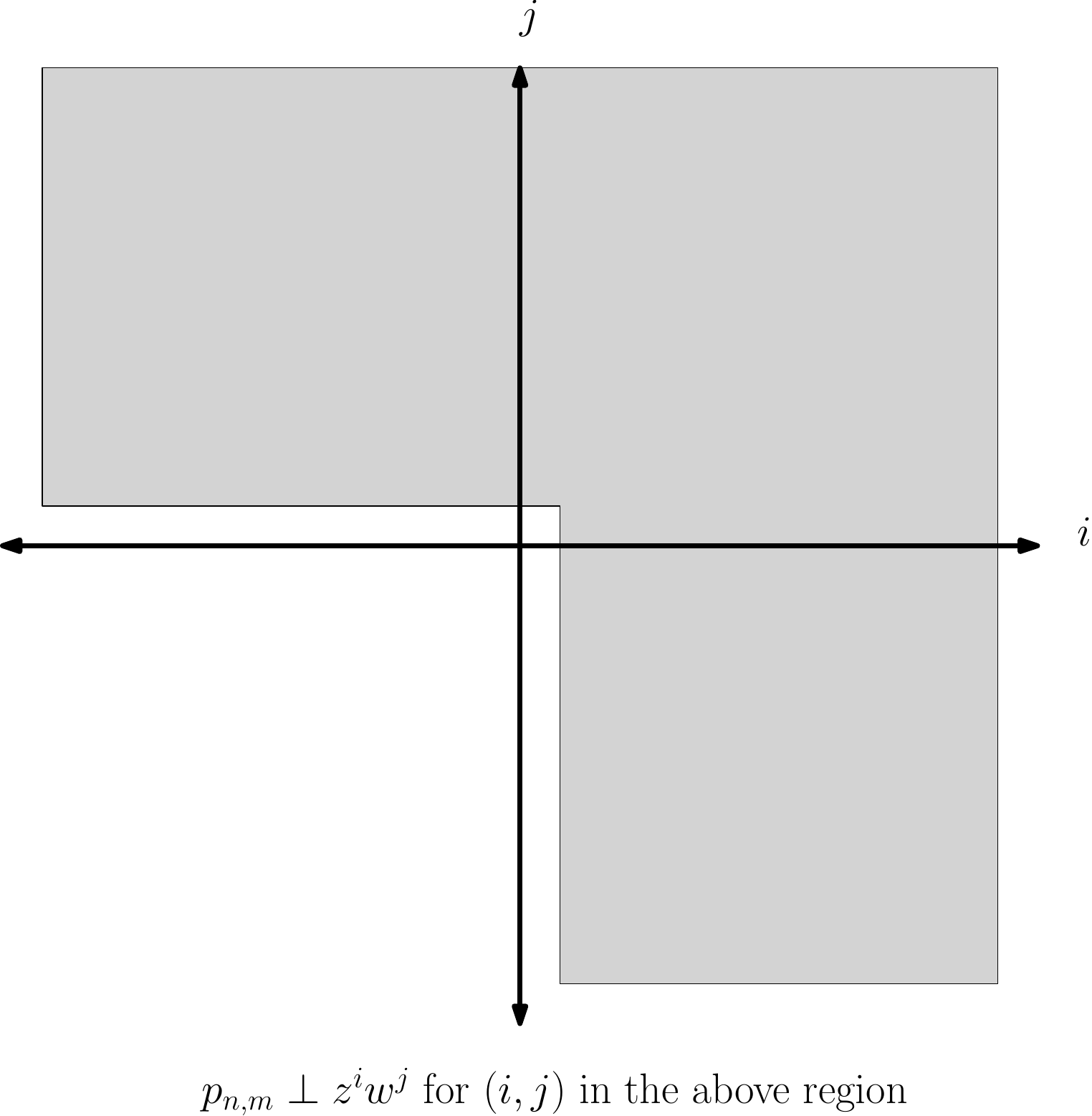}} \label{orthsetp}
\end{figure}

Write  $p_{n,m}(z,w)=\sum^m_{i=0}p_i(z)w^i$.

Since $p_{n,m}(z,w)$ is
stable  it follows from the Schur-Cohn test for stability \cite{GWa} that the $m\times m$ matrix
\begin{align}\label{gohsme}
T_m(z)&=
\begin{bmatrix}p_0(z) && \bigcirc\\
p_1(z) &\ddots\\
\vdots \\
p_{m-1}(z) &\cdots & p_0(z)\end{bmatrix}
\begin{bmatrix}\bar p_0(1/z) & \bar p_1(1/z) &\cdots & \bar p_{m-1}(1/z)\\\nonumber
&\ddots \\
\bigcirc &\cdots&\cdots& \bar p_0 (1/z)\end{bmatrix}\\
&\qquad -
\begin{bmatrix}\bar p_m(1/z) && \bigcirc\\
\vdots &\ddots\\
\bar p_1(1/z) &\cdots & \bar p_m(1/z)\end{bmatrix}
\begin{bmatrix} p_m(z) &\cdots & p_1(z)\\
\vdots &\ddots\\
\bigcirc&& p_m(z)\end{bmatrix}
\end{align}
is positive definite for $|z|=1$. Here $\bar{p}_j(z) =
\overline{p_j(\bar{z})}$.  

Define the following parametrized version of a one variable
Christoffel-Darboux kernel
\begin{align}\label{christo}
L(z,w;\eta) &= z^n\frac{p_{n,m}(z,w)\overline{p_{n,m}(1/\bar{z},\eta)} - \tl{p}_{n,m}(z,w)\overline{\tl{p}_{n,m}(1/\bar{z},\eta)}}{1-w\bar{\eta}}
\\\nonumber & =z^n[1, \ldots,w^{m-1}] T_m(z)[1,\ldots,\eta^{m-1}]^{\dag}\\\nonumber &=\sum_{j=0}^{m-1} a_j(z,w)\bar \eta^j,
\end{align}
where $a_j(z,w),\ j=0,\ldots, m-1$ are polynomials in $(z,w)$, as the
following lemma shows in addition to several other important
observations. 
\begin{lemma}\label{chrisdar}
Let $p_{n,m}(z,w)$ be a stable polynomial of degree $(n,m)$. Then,
\begin{enumerate} 
\item $L$ is a polynomial of degree $(2n,m-1)$ in $(z,w)$ and a polynomial
of degree $m-1$ in $\bar{\eta}$.

\item $L(\cdot,\cdot;\eta)$ spans a subspace of dimension $m$ as
  $\eta$ varies over $\C$.

\item $L$ is symmetric in the sense that
\[
L(z,w;\eta)= z^{2n} (w \bar{\eta})^{m-1} \overline{L(1/\bar{z}, 1/\bar{w}; 1/\bar{\eta})},
\]
so $a_k=\tl{a}_{m-k-1}$.

\item $L$ can be written as
\[
L(z,w;\eta) = p_{n,m}(z,w)A(z,w;\eta) + \tl{p}_{n,m}(z,w)B(z,w;\eta)
\]
where $A,B$ are polynomials of degree $(n,m-1, m-1)$ in
$(z,w, \bar{\eta})$.
\end{enumerate}

\begin{proof} The numerator of $L$ vanishes when $w = 1/\bar{\eta}$,
  so the factor $(1-w\bar{\eta})$ divides the numerator.  This gives
  (1).

  For (2), when $|z| = 1$ use equation~\eqref{christo}. 
Since
$T_m(z) >0$ for $|z|=1$, $L(z,w;\eta)$ spans a set of polynomials of
dimension $m$.  

For (3), this is just a computation.  

For (4), observe that (suppressing the dependence of $p$ on $n$ and $m$),
\begin{align}
 & z^n \frac{p(z,w)\overline{p(1/\bar{z},\eta)}  
 - \tl{p}(z,w)\overline{\tl{p}(1/\bar{z},\eta)}}{1-w\bar{\eta}}
  \nonumber \\
  &= p(z,w) \underset{A(z,w;\eta)}{\underbrace{\left(\frac{\bar{\eta}^m\tl{p}(z,1/\bar{\eta})
        - \bar{\eta}^m\tl{p}(z,w)}{1 - w\bar{\eta} } \right)} } 
  + \tl{p}(z,w) \underset{B(z,w;\eta)}{\underbrace{ \left(\frac{\bar{\eta}^m p(z,w) -
      \bar{\eta}^mp(z,1/\bar{\eta})}{1 - w\bar{\eta}} \right)} }. \label{ABdef}
\end{align}
\end{proof}

\end{lemma}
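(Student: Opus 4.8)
The plan is to establish the four parts in order, exploiting the two faces of the definition~\eqref{christo}: the rational expression for the divisibility and degree statements (1), (3), (4), and the $T_m(z)$ quadratic form for the spanning statement (2). For part~(1), write $N(z,w;\eta)$ for the numerator of $L$ in~\eqref{christo}. The key observation is the pair of reflection identities
\[
z^n\,\overline{p_{n,m}(1/\bar z,\eta)} = \bar\eta^m\,\tl{p}_{n,m}(z,1/\bar\eta), \qquad z^n\,\overline{\tl{p}_{n,m}(1/\bar z,\eta)} = \bar\eta^m\,p_{n,m}(z,1/\bar\eta),
\]
which drop out of $p_{n,m}(z,w)=\sum_{k,l}c_{kl}z^kw^l$ and $\tl{p}_{n,m}(z,w)=\sum_{k,l}\bar c_{kl}z^{n-k}w^{m-l}$. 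Using them, $N=\bar\eta^m\bigl(p_{n,m}(z,w)\tl{p}_{n,m}(z,1/\bar\eta)-\tl{p}_{n,m}(z,w)p_{n,m}(z,1/\bar\eta)\bigr)$, which visibly vanishes on $w=1/\bar\eta$, so $1-w\bar\eta$ divides the polynomial $N$. A degree count in the product form of $N$ gives bidegree $(2n,m,m)$ in $(z,w,\bar\eta)$, so after dividing by $1-w\bar\eta$ the polynomial $L$ has degree $(2n,m-1)$ in $(z,w)$ and $m-1$ in $\bar\eta$; in particular each $a_j$ has bidegree $(2n,m-1)$.

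For part~(2), I would use $L(z,w;\eta)=z^n[1,w,\dots,w^{m-1}]\,T_m(z)\,[1,\eta,\dots,\eta^{m-1}]^{\dag}$. Pick distinct $\eta_1,\dots,\eta_m$; if $\sum_k c_k L(\cdot,\cdot;\eta_k)$ is the zero polynomial, then evaluating at any $z\in\T$ and all $w$ forces $T_m(z)\bigl(\sum_k c_k[1,\bar\eta_k,\dots,\bar\eta_k^{m-1}]^{T}\bigr)=0$; since $T_m(z)$ is positive definite on $\T$ by~\eqref{gohsme} and the Schur--Cohn test, and the $\bar\eta_k$ form a nonsingular Vandermonde, all $c_k=0$. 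Hence the span has dimension $\ge m$; it has dimension $\le m$ because $L(\cdot,\cdot;\eta)=\sum_{j=0}^{m-1}a_j(z,w)\bar\eta^j\in\Span\{a_0,\dots,a_{m-1}\}$ for every $\eta$. The one subtlety is that the $T_m(z)$ identity holds only for $|z|=1$, so the linear-independence argument must be phrased for the polynomial identity and then tested on the torus.

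For part~(3), substitute $(1/\bar z,1/\bar w,1/\bar\eta)$ for $(z,w,\eta)$ in the rational formula, conjugate, and clear the powers of $z$, $w$, $\bar\eta$ that appear; this is the routine ``just a computation'', and $a_k=\tl{a}_{m-k-1}$ then follows by matching coefficients of $\bar\eta^j$ on the two sides and using the bidegree $(2n,m-1)$ from part~(1). For part~(4), starting from the expression $N=\bar\eta^m\bigl(p_{n,m}(z,w)\tl{p}_{n,m}(z,1/\bar\eta)-\tl{p}_{n,m}(z,w)p_{n,m}(z,1/\bar\eta)\bigr)$ found above, add and subtract $\bar\eta^m p_{n,m}(z,w)\tl{p}_{n,m}(z,w)$ to split $N$ as $p_{n,m}(z,w)(1-w\bar\eta)A+\tl{p}_{n,m}(z,w)(1-w\bar\eta)B$ with $A,B$ given by explicit quotients. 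Each of $A,B$ is in fact a polynomial because every monomial contributing to it has the shape $z^{n-k}\bar\eta^l\bigl(1-(w\bar\eta)^{m-l}\bigr)$, or its analogue for $B$, which is divisible by $1-w\bar\eta$; reading off exponents after that division gives bidegree $(n,m-1,m-1)$ in $(z,w,\bar\eta)$ for both.

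There is no conceptual obstacle here; the care needed is all bookkeeping. The genuinely delicate points are getting the reflection/conjugation identities of part~(1) exactly right — they are precisely what forces $N$, $A$, $B$ to be honest polynomials rather than Laurent polynomials — and, in part~(2), the passage from the torus-only quadratic form to a true polynomial identity. I expect part~(3) to be the most computation-heavy but entirely mechanical once the meaning of $\overline{p_{n,m}(1/\bar z,\eta)}$ is pinned down.
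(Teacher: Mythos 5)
Your proposal is correct and follows essentially the same route as the paper: divisibility of the numerator by $1-w\bar\eta$ for (1), positive definiteness of $T_m(z)$ on $\T$ for (2), direct computation for (3), and exactly the decomposition \eqref{ABdef} (add and subtract $\bar\eta^m p\,\tl{p}$) for (4). You simply make explicit some details the paper leaves implicit, namely the reflection identities $z^n\overline{p(1/\bar z,\eta)}=\bar\eta^m\tl{p}(z,1/\bar\eta)$, the Vandermonde argument in (2), and the degree bookkeeping for $A$ and $B$ (which the paper records separately at the start of Section 4).
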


\section{Orthogonality relations in $\bshilb$}

Our main goal is to prove that $L$ and $a_0, \dots, a_{m-1}$ possess a
great many orthogonality relations in $L^2(1/|p_{n,m}|^2 d\sigma)$.
The orthogonality relations of $L$ are depicted in Figure
\ref{orthset_L}.

\begin{figure}
\scalebox{0.6}{\includegraphics{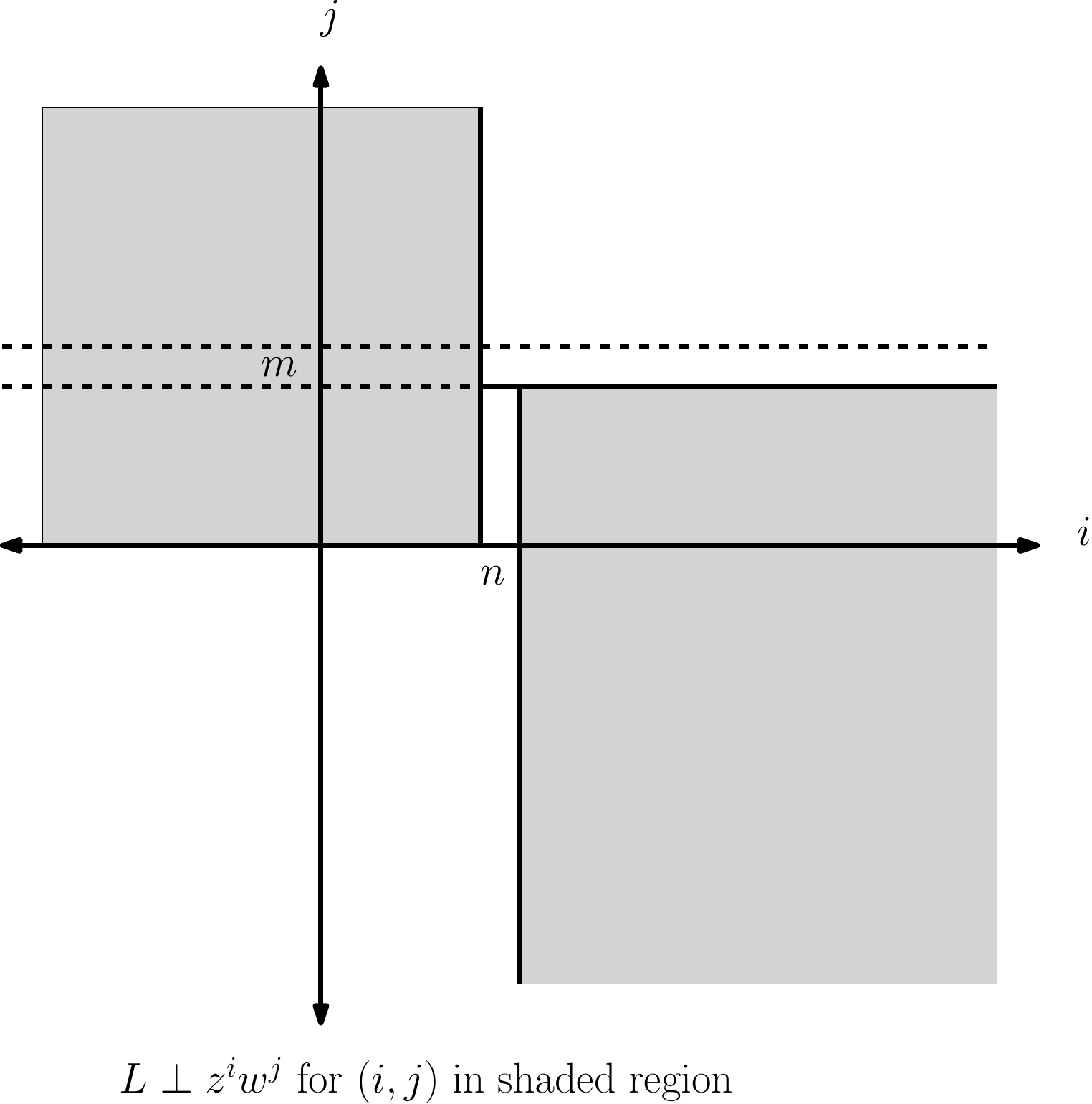}}
\caption{Orthogonalities of $L$. See Theorem \ref{mainprop}}  \label{orthset_L}
\end{figure}

\begin{theorem} \label{mainprop} 
 In $L^2(1/|p_{n,m}|^2 d\sigma)$, each $a_k$ is orthogonal to the set
\[
\begin{aligned}
\mcO_k = &\{z^i w^j: i > n, j < 0 \} \\
&\cup \{ z^i w^j: 0 \leq j < m, j \ne k\} \\
&\cup \{z^i w^j: i < n, j \geq m\} \\
&\cup \{z^i w^k: i \ne n \}.
\end{aligned}
\]

In $L^2(1/|p_{n,m}|^2 d\sigma)$, $L(\cdot,\cdot; \eta)$ is orthogonal to the
set
\begin{align}
\mcO =  &\{z^i w^j: i > n, j < 0 \} \nonumber \\
&\cup\{ z^i w^j: i \ne  n, 0 \leq j < m \} \label{Wdef} \\
&\cup \{z^i w^j: i < n, j \geq m\}. \nonumber
\end{align}
\end{theorem}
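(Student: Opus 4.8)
I would prove both assertions simultaneously by computing $\ip{z^iw^j}{L(\cdot,\cdot;\eta)}$, viewed as a polynomial in $\eta$ of degree at most $m-1$, and then reading off the orthogonalities of the $a_k$ from its coefficients: writing $L=\sum_k a_k\bar\eta^k$ gives $\ip{z^iw^j}{L(\cdot,\cdot;\eta)}=\sum_k\ip{z^iw^j}{a_k}\eta^k$. First the ``outer'' monomials. By Lemma~\ref{chrisdar}(4), $L=p_{n,m}A+\tl{p}_{n,m}B$ with $A(\cdot,\cdot;\eta),B(\cdot,\cdot;\eta)$ polynomials of degree at most $n$ in $z$ and at most $m-1$ in $w$. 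Expanding $A=\sum_{0\le a\le n,\,0\le b\le m-1}\alpha_{ab}z^aw^b$ (coefficients depending on $\eta$) and using $\overline{p_{n,m}}/|p_{n,m}|^2=1/p_{n,m}$ on $\T^2$, one gets $\ip{z^iw^j}{p_{n,m}A}=\sum_{a,b}\overline{\alpha_{ab}}\ip{z^{i-a}w^{j-b}}{p_{n,m}}$, which by Lemma~\ref{porth} vanishes whenever $i>n$ (then $i-a>0$ for all $a\le n$) or $j\ge m$ (then $j-b>0$ for all $b\le m-1$). Symmetrically $\ip{z^iw^j}{\tl{p}_{n,m}B}$ vanishes whenever $i<n$ or $j<m$. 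Adding, $\ip{z^iw^j}{L(\cdot,\cdot;\eta)}=0$ for every $\eta$ whenever $i>n,\,j<0$ or $i<n,\,j\ge m$; since the $\eta$-polynomial vanishes identically, each $\ip{z^iw^j}{a_k}$ also vanishes there. This disposes of the first and third sets defining $\mcO$ and $\mcO_k$.

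It remains to treat monomials $z^iw^j$ with $0\le j\le m-1$, where I claim $\ip{z^iw^j}{L(\cdot,\cdot;\eta)}=\delta_{i,n}\,\eta^j$. Returning to the defining formula~\eqref{christo} and using $1/\bar z=z$, $1/\bar w=w$ and $\tl{p}_{n,m}(z,w)=z^nw^m\overline{p_{n,m}(z,w)}$ on $\T^2$, a direct manipulation rewrites $\ip{z^iw^j}{L(\cdot,\cdot;\eta)}$ as
\[
\int_{\T^2}z^{i-n}w^{j+1}\,\frac{1}{w-\eta}\left[\frac{p_{n,m}(z,\eta)}{p_{n,m}(z,w)}-\frac{\tl{p}_{n,m}(z,\eta)}{\tl{p}_{n,m}(z,w)}\right]d\sigma .
\]
Fix $|\eta|<1$ (the answer is polynomial in $\eta$, so nothing is lost) and integrate in $w$ first. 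Stability of $p_{n,m}$ means $p_{n,m}(z,\cdot)$ has no zeros in $\overline{\D}$ for $z\in\T$, so the term with $1/p_{n,m}(z,w)$ reduces, by the residue theorem, to $\eta^j$ (the residue at $w=\eta$; $j\ge0$ prevents a pole at the origin). On the other hand $\tl{p}_{n,m}(z,\cdot)$ has all $m$ of its zeros strictly inside $\D$ and $1/\tl{p}_{n,m}(z,w)$ decays like $w^{-m}$ at $\infty$, so for $j\le m-1$ the term with $1/\tl{p}_{n,m}(z,w)$ is holomorphic outside $\D$ and $O(w^{-2})$ at $\infty$, hence integrates to $0$. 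Integrating the surviving $z^{i-n}\eta^j$ in $z$ then produces $\delta_{i,n}\eta^j$.

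With the two computations in hand the theorem follows. For $z^iw^j\in\mcO$ either $(i,j)$ lies in the first/third regions (handled above) or $i\ne n$ and $0\le j<m$, in which case $\ip{z^iw^j}{L}=\delta_{i,n}\eta^j=0$; this gives orthogonality of $L(\cdot,\cdot;\eta)$ to $\mcO$. Matching $\eta^k$-coefficients gives $\ip{z^iw^j}{a_k}=\delta_{i,n}\delta_{j,k}$ for $0\le j<m$ and $\ip{z^iw^j}{a_k}=0$ in the first/third regions; thus $a_k$ annihilates the monomials $z^iw^j$ with $0\le j<m$, $j\ne k$ (since $\delta_{j,k}=0$) and $z^iw^k$ with $i\ne n$ (since $\delta_{i,n}=0$), which is exactly orthogonality to $\mcO_k$. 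The one genuinely delicate point is the $w$-contour integral in the second step: one must correctly locate the zeros of $p_{n,m}(z,\cdot)$ (outside $\overline{\D}$) and of $\tl{p}_{n,m}(z,\cdot)$ (strictly inside $\D$) for $z\in\T$, and be careful that the apparent pole at $w=\eta$ is genuine for the $p_{n,m}$-fraction but cancels in the full integrand.
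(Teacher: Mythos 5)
Your proof is correct, but the way you handle the crucial ``strip'' $0\le j<m$ is genuinely different from the paper's. Your treatment of the outer regions is essentially the paper's first step: both use Lemma~\ref{porth} together with the decomposition $L=p_{n,m}A+\tl{p}_{n,m}B$ and the degree bounds from Lemma~\ref{chrisdar}(4) (the paper applies this coefficient-by-coefficient to $a_k=p_{n,m}A_k+\tl{p}_{n,m}B_k$, using the sharper bound $\deg_w A_k,\deg_w B_k\le k$, while you apply it to $L$ en bloc). For the strip, however, the paper stays entirely algebraic: it takes the orthogonality region produced by that first step and reflects it using the symmetry $a_k=\tl{a}_{m-k-1}$ from Lemma~\ref{chrisdar}(3), and $\mcO_k$ is the union of the region and its reflection --- no further integration is needed. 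You instead evaluate $\ip{z^iw^j}{L(\cdot,\cdot;\eta)}=\delta_{i,n}\eta^j$ by a residue computation in $w$; this is in effect a rederivation of the one-variable reproducing property of the fiberwise Christoffel--Darboux kernel $\bar z^nL(z,\cdot\,;\eta)$ for each $z\in\T$, which is precisely the observation the paper invokes later in Proposition~\ref{prop:Lnorm}. Your route costs a little more care (locating the zeros of $p_{n,m}(z,\cdot)$ outside $\cD$ and of $\tl{p}_{n,m}(z,\cdot)$ inside $\D$, checking the $O(w^{-2})$ decay for $j\le m-1$, and restricting to $|\eta|<1$ before extending by polynomiality --- all of which you do correctly), but it buys more: you obtain the exact biorthogonality $\ip{z^iw^j}{a_k}=\delta_{i,n}\delta_{j,k}$ on the strip, which is the duality statement of Remark~\ref{akremark} and equation~\eqref{dualbasis}, rather than just the vanishing of the off-diagonal inner products.
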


Note that

\begin{align*}
\mcO_k &= \{z^n w^j: 0\leq j < m, j \ne k\} \cup \mcO, \\
\mcO &= \bigcap_{k=0}^{m-1} \mcO_k.
\end{align*}

\begin{corollary} \label{cor:ak}
In $L^2(1/|p_{n,m}|^2 d\sigma)$, the polynomial $a_k$ is uniquely determined
(up to unimodular multiples) by the conditions:
\[
a_k \in \Span \{z^i w^j: (0,0) \leq (i,j) \leq
(2n,m-1)\},
\]
\[
\begin{aligned}
a_k \perp &\{z^i w^j: (0,0) \leq (i,j) \leq
(2n,m-1),  j \ne k\}\\
& \cup \{z^i w^k: 0 \leq i \leq 2n, i \ne n\},
\end{aligned}
\]
and
\[
||a_k||^2 = \int_{-\pi}^{\pi} T_{k,k}(e^{i\theta},e^{i\theta}) \frac{d\theta}{2\pi}.
\]
\end{corollary}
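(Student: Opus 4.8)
The plan is to prove the characterization in two stages: first that the polynomial $a_k$ itself satisfies the three listed conditions, and then that these conditions determine $a_k$ up to a unimodular scalar. The first two conditions are essentially immediate. That $a_k$ lies in $\Span\{z^iw^j:(0,0)\le(i,j)\le(2n,m-1)\}$ is part (1) of Lemma~\ref{chrisdar}, since $L=\sum_{j}a_j\bar\eta^j$ has degree $(2n,m-1)$ in $(z,w)$. The orthogonality condition follows from Theorem~\ref{mainprop}, because the displayed monomial set is contained in $\mcO_k$: its first family lies inside $\{z^iw^j:0\le j<m,\ j\ne k\}$ and its second family inside $\{z^iw^k:i\ne n\}$, and $a_k\perp\mcO_k$.

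The substance is the norm identity. I would first establish the normalization $\ip{a_j}{z^nw^k}=\delta_{j,k}$ for $0\le j,k\le m-1$ — equivalently $\ip{L(\cdot,\cdot;\eta)}{z^nw^k}=\bar\eta^k$ — working directly from the closed form of the numerator of $L$ in~\eqref{christo}: expand $(1-w\bar\eta)^{-1}$ as a geometric series and integrate first in $z$ and then in $w$ by the mean value property, exactly as in the proof of Lemma~\ref{porth}. Of the two resulting contributions, the one built from $p_{n,m}$ collapses to $\bar\eta^k$ since $1/p_{n,m}$ is holomorphic on $\overline{\D^2}$, while the one built from $\tl p_{n,m}$ vanishes because the surviving factor $w^{m-k}$ (with $m-k\ge1$) leaves only strictly positive powers of $w$. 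Writing $[a_k]_{n,k}$ for the coefficient of $z^nw^k$ in $a_k$ and expanding the second slot of $\ip{a_k}{a_k}$ into monomials, Theorem~\ref{mainprop} annihilates every term except the $z^nw^k$ one, so $\|a_k\|^2=\ip{a_k}{a_k}=\overline{[a_k]_{n,k}}$; since the left side is real and positive, $[a_k]_{n,k}=\|a_k\|^2$. Finally, the representation $L(z,w;\eta)=z^n[1,\dots,w^{m-1}]T_m(z)[1,\dots,\eta^{m-1}]^{\dag}$ from~\eqref{christo} yields $a_k(z,w)=z^n\sum_i w^i[T_m(z)]_{i,k}$, so $[a_k]_{n,k}$ is the constant Fourier coefficient in $z$ of the Laurent polynomial $[T_m(z)]_{k,k}$, i.e. $\int_{-\pi}^{\pi}T_{k,k}(e^{i\theta},e^{i\theta})\,\frac{d\theta}{2\pi}$, which is the claimed value.

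For uniqueness, put $\mcH=\Span\{z^iw^j:(0,0)\le(i,j)\le(2n,m-1)\}$. The $(2n+1)m$ monomials spanning $\mcH$ are linearly independent in $L^2(1/|p_{n,m}|^2\,d\sigma)$, so $\dim\mcH=(2n+1)m$, and the orthogonality conditions in the statement say exactly that a candidate $g$ is orthogonal, within $\mcH$, to the span $V$ of all these monomials other than $z^nw^k$. Since $\dim V=(2n+1)m-1$, the space $\mcH\ominus V$ is one-dimensional; as $a_k\in\mcH$, $a_k\perp V$, and $a_k\ne0$ (because $\|a_k\|^2>0$, the diagonal entries of the positive definite matrix $T_m(e^{i\theta})$ being positive), $a_k$ spans $\mcH\ominus V$. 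Hence $g=ca_k$ for some scalar $c$, and the norm condition $\|g\|^2=\|a_k\|^2$ forces $|c|=1$.

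The main obstacle is the normalization $\ip{a_j}{z^nw^k}=\delta_{j,k}$: every step downstream is routine bookkeeping, but this identity requires unwinding the explicit form of $L$ and carefully tracking the two contour-integral contributions — the same mean-value technique as in Lemma~\ref{porth}, but with more moving parts.
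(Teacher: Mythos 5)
Your argument is correct. The paper itself gives almost no proof of this corollary: the first two conditions are read off from Lemma \ref{chrisdar}(1) and Theorem \ref{mainprop} exactly as you do, the uniqueness is the same dimension count you perform (the span has dimension $(2n+1)m$ and the orthogonality conditions cut out a one-dimensional orthogonal complement containing $a_k \ne 0$), and for the norm identity the paper simply cites Proposition \ref{prop:Lnorm}, whose formula $\int_\T \overline{a_i}\,a_j\,d\mu^{\theta} = T_{i,j}(z)$ yields $\|a_k\|^2$ upon integrating the diagonal entry in $\theta$. Where you genuinely diverge is the norm computation: instead of the squared-norm identity $\|K_{\eta}\|^2 = K(\eta,\eta)$ for the one-variable Christoffel--Darboux kernel $\bar z^n L(z,\cdot\,;\eta)$, you use its reproducing property on monomials to obtain the normalization $\ip{a_j}{z^n w^k} = \delta_{j,k}$ (sharpening the ``duality'' of Remark \ref{akremark}, which asserts only the vanishing of the off-diagonal pairings, not the value $1$ on the diagonal), then let the orthogonality relations collapse $\|a_k\|^2$ to the single Fourier coefficient $\overline{[a_k]_{n,k}}$, which you extract from the matrix representation in \eqref{christo}. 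Your contour-integral verification of $\ip{L(\cdot,\cdot\,;\eta)}{z^n w^k} = \bar\eta^k$ checks out: the $\tl{p}_{n,m}$ contribution vanishes because $m-k\ge 1$, and the identity extends from $|\eta|<1$ to all $\eta$ since both sides are polynomials in $\bar\eta$. What your route buys is independence from Section 6 --- the corollary is proved entirely from the material through Theorem \ref{mainprop} together with the explicit formula \eqref{christo}. What the paper's route buys is the stronger statement \eqref{ainnerproducts}, the full matrix of parametric inner products rather than just the $\theta$-integrated diagonal, essentially for free.
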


(The last fact follows from Proposition \ref{prop:Lnorm}, which is not
currently essential.)

\begin{figure}
\centerline{\scalebox{0.6}{\includegraphics{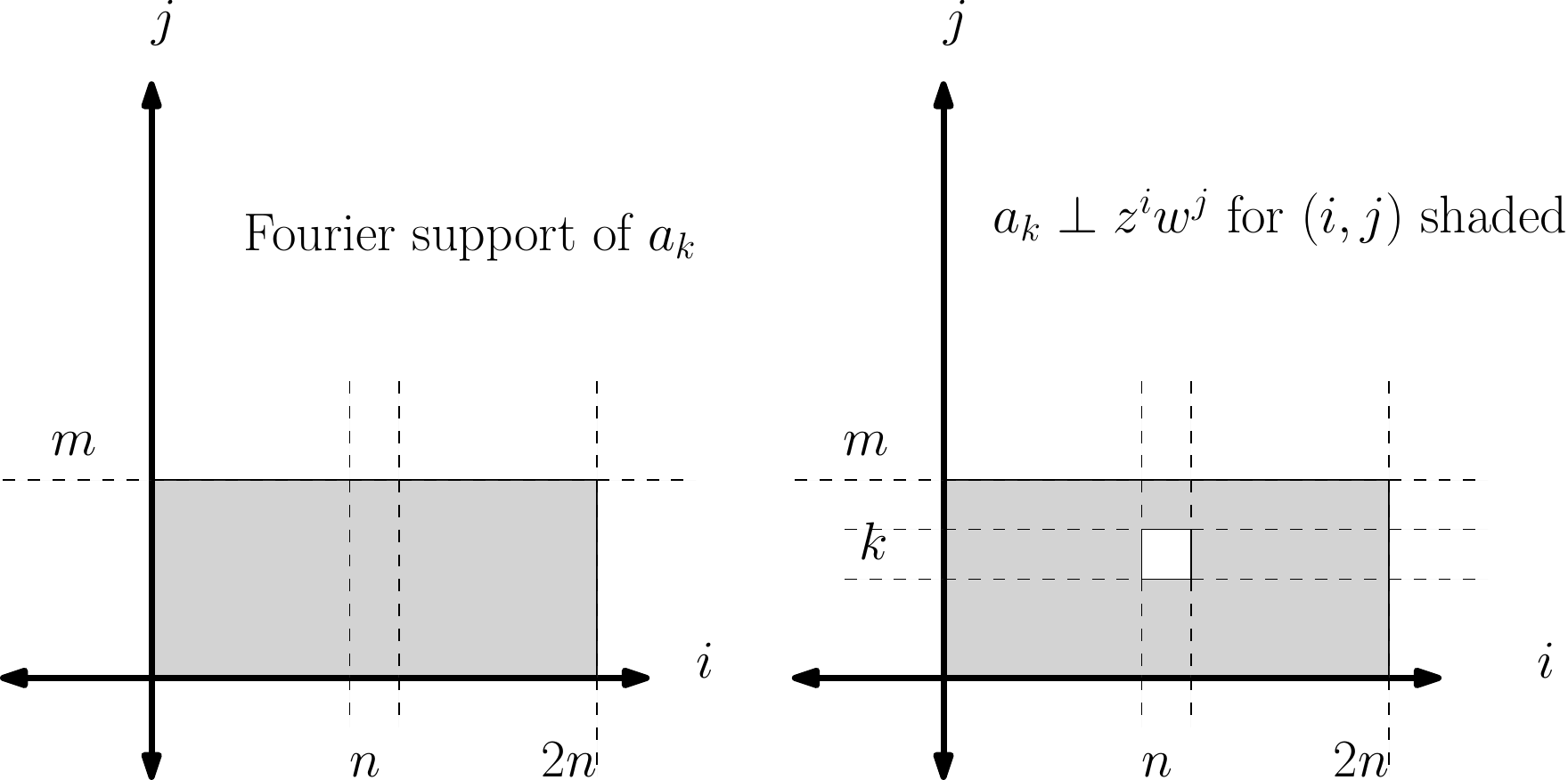}}}
\caption{The $a_k$ are uniquely determined by the above
  properties. See Corollary \ref{cor:ak}.}
\label{aksupporth}
\end{figure}

\begin{remark} \label{akremark} 
We emphasize that (1) each $a_k$ is explicitly given from coefficients
of $p_{n,m}$, (2) each $a_k$ is determined by the orthogonality
relations in Corollary \ref{cor:ak} (depicted in Figure
\ref{aksupporth}), and (3) each satisfies the additional orthogonality
relations from Theorem \ref{mainprop}.  One useful consequence of this
is that the set
\[
\{z^j a_k(z,w): j \in \mathbb{Z}, 0\leq k < m\}
\]
is dual to the monomials
\[
\{z^{j+n} w^k: j \in \mathbb{Z}, 0\leq k < m\}
\]
within in the subspace
\[
S = \clspan \{z^j w^k: j \in \mathbb{Z}, 0\leq k < m\}.
\]
Namely,
\[
\ip{z^{j_1+n}w^{k_1}}{z^{j_2} a_{k_2}} = 0
\]
unless $j_1=j_2$ and $k_1=k_2$.

In particular, if $f \in S$, then 
\begin{equation} \label{dualbasis}
f \perp z^ja_k \text{ implies } \hat{f}(j+n,k) = 0.
\end{equation}
\end{remark}

\section{The proof of Theorem \ref{mainprop}}
We begin by writing

\[
A(z,w;\eta) = \sum_{j=0}^{m-1} A_j(z,w)\bar{\eta}^{j} \qquad
B(z,w;\eta) = \sum_{j=0}^{m-1} B_j(z,w) \bar{\eta}^{j}.
\]

Recall equation \eqref{christo} and Lemma \ref{chrisdar} item (4).  By
examining coefficients of $\bar{\eta}^j$ in $L$
\[
a_j = p_{n,m} A_j+ \tl{p}_{n,m} B_j.
\]
Also, $A_j$ and $B_j$ have at most degree $j$ in $w$.  To see this,
recall equation \eqref{ABdef} and observe that
\[
A(z,w;\eta) = \sum_{j} \tl{p}_j(z) \bar{\eta}^j \frac{1-
  (w\bar{\eta})^{m-j}}{1-w\bar{\eta}} 
\]
which shows that $A_j(z,w)$ has degree at most $j$ in $w$ (i.e. powers
of $w$ only occur next to greater powers of $\eta$).  The same holds
for $B$.

\begin{proof}[Proof of Theorem \ref{mainprop}]
By Lemma \ref{porth}, $p_{n,m}$ is orthogonal to
\[
\{ z^i w^j : (i,j) \nleq (0,0)\}
\]
and since $A_k$ has degree at most $n$ in $z$ and $k$ in $w$,
\[
p_{n,m} A_k \text{ is orthogonal to } \{z^i w^j: (i,j) \nleq (n,k)\}.
\]

Also,
\[
\tl{p}_{n,m} B_k \text{ is orthogonal to } \{z^i w^j : (i,j) \ngeq (n,m) \}
\]
since the orthogonality relation for $\tl{p}_{n,m}$ (also from Lemma
\ref{porth}) is unaffected by multiplication by holomorphic monomials.

Hence, $a_k = p_{n,m}A_k + \tl{p}_{n,m} B_k$ is orthogonal to the intersection of
these sets; namely,
\begin{equation} \label{aorth1}
\{z^{i} w^j: (i,j) \nleq (n,k) \text{ and } (i,j) \ngeq (n,m)\}.
\end{equation}
Since
\[
a_{m-k-1} \perp \{z^{i} w^j: (i,j) \nleq (n,m-k-1) \text{ and }
(i,j) \ngeq (n,m)\}
\]
and since $a_k = \tl{a}_{m-k-1} = z^{2n} w^{m-1}
\overline{a_{m-k-1}(1/\bar{z}, 1/\bar{w})}$,
\begin{align}
a_k \perp& \{z^{2n-i} w^{m-j-1}: (i,j) \nleq (n,m-k-1) \text{ and }
(i,j) \ngeq (n,m)\} \nonumber \\
& = \{z^{i} w^j: (n,k) \nleq (i, j) \text{ and } (n,-1) \ngeq
(i,j)\}. \label{aorth2}
\end{align}

Hence, $a_k$ is orthogonal to the union of the sets in \eqref{aorth1}
and \eqref{aorth2}.  The set in \eqref{aorth2} contains $\{z^iw^j:
i<n, j \geq 0\}$ and the set in \eqref{aorth1} contains $\{z^iw^j:
i>n, j \leq m-1\}$.  Also, the set in \eqref{aorth1} contains $\{z^nw^j:
k<j\leq m-1\}$ while the set in \eqref{aorth2} contains $\{z^nw^j:
0\leq j< k\}$.  Combining all of this we get $a_k \perp \mcO_k$.

Finally, $L$ is orthogonal to
the intersection of $\mcO_0, \dots, \mcO_{m-1}$.
\end{proof}

We now look at the space generated by shifting the $a_k$'s by powers
of $z$.

\begin{theorem}\label{GW} With respect to $L^2(\frac{d\sigma}{|p_{n,m}|^2})$,
\begin{align}
&\clspan\{z^i a_j(z,w): 0 \leq i, 0\leq j < m\} \nonumber \\
&= \clspan\{z^i w^j: 0 \leq i, 0\leq j < m\} \ominus \clspan\{z^i w^j:
  0 \leq i < n, 0\leq j < m\} \label{GWspace}
\end{align}
and this is orthogonal to the larger set
\[
\clspan \{z^i w^j: i < n, j \geq 0\}.
\]
\end{theorem}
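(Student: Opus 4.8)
The plan is to identify the left-hand side of \eqref{GWspace}, which I will abbreviate as $\mathcal{A}=\clspan\{z^ia_j : i\ge 0,\ 0\le j<m\}$, with the orthogonal complement inside $P:=\clspan\{z^iw^j : i\ge 0,\ 0\le j<m\}$ of the finite-dimensional space $Q:=\clspan\{z^iw^j : 0\le i<n,\ 0\le j<m\}$, and to obtain the orthogonality to $\clspan\{z^iw^j : i<n,\ j\ge 0\}$ along the way. The only substantive inputs are Theorem \ref{mainprop} and its consequence \eqref{dualbasis} from Remark \ref{akremark}; everything else is Hilbert-space bookkeeping, together with the elementary shift identity $\ip{z^sw^t}{z^ia_k}=\ip{z^{s-i}w^t}{a_k}$, valid because $|z|=|w|=1$ on $\T^2$.

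First I would record two containments. Since each $a_k$ is a polynomial of degree $(2n,m-1)$, we have $z^ia_k\in P$ for every $i\ge 0$, so $\mathcal{A}\subseteq P$. Next, using the shift identity and the description of $\mcO_k$ in Theorem \ref{mainprop}, I would verify that $a_k\perp\{z^sw^t : s<n,\ t\ge 0\}$: for $0\le t<m$ with $t\ne k$ this monomial lies in the second family defining $\mcO_k$, for $t=k$ it lies in the last family (since $s<n$ forces $s\ne n$), and for $t\ge m$ it lies in the third family. For $i\ge 0$ and $s<n$ we have $s-i<n$, so $\ip{z^sw^t}{z^ia_k}=\ip{z^{s-i}w^t}{a_k}=0$; passing to closed spans shows that $\mathcal{A}$ is orthogonal to $\clspan\{z^iw^j : i<n,\ j\ge 0\}$. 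In particular $\mathcal{A}\perp Q$, hence $\mathcal{A}\subseteq P\ominus Q$. This already yields the ``orthogonal to the larger set'' assertion and reduces the theorem to the reverse inclusion $P\ominus Q\subseteq\mathcal{A}$.

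For the reverse inclusion I would argue by complementation: since $\mathcal{A}$ is a closed subspace of the Hilbert space $P\ominus Q$, it suffices to show that any $f\in P$ with $f\perp Q$ and $f\perp\mathcal{A}$ vanishes. Such an $f$ lies in $S=\clspan\{z^iw^j : i\in\Z,\ 0\le j<m\}$, and from $f\perp z^ia_k$ for all $i\ge 0$ and $0\le k<m$, \eqref{dualbasis} gives $\hat f(i+n,k)=0$ for all such $i,k$, i.e. $\hat f(a,k)=0$ whenever $a\ge n$ and $0\le k<m$. Combined with $f\in P$, which forces the Fourier support of $f$ to lie in $\{a\ge 0,\ 0\le k<m\}$, we conclude that $\hat f$ is supported in the finite box $\{0\le a<n,\ 0\le k<m\}$, so $f$ is a polynomial lying in $Q$. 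Since also $f\perp Q$, we get $f=0$; hence $\mathcal{A}=P\ominus Q$, establishing \eqref{GWspace}.

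The only real obstacle is having \eqref{dualbasis} available, which rests on the full force of Theorem \ref{mainprop} --- in particular on $\ip{z^nw^k}{a_k}\ne 0$, so that $a_k$ genuinely detects the $(n,k)$ Fourier mode and the orthogonality relation $f\perp z^ia_k$ can be converted into a vanishing Fourier coefficient. Granting that, the proof is a short orthogonal-complement argument; the one point requiring care is matching each monomial $z^sw^t$ with $s<n$, $t\ge 0$ against the correct family inside $\mcO_k$ when checking orthogonality to the larger set.
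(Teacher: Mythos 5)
Your proposal is correct and follows essentially the same route as the paper: the containment $\mathcal{A}\subseteq P\ominus Q$ via Theorem \ref{mainprop} plus $\bar z$-invariance of the shifted orthogonality, and the reverse inclusion by showing any $f\in P\ominus Q$ orthogonal to all $z^ia_k$ has Fourier support confined to the finite box (via \eqref{dualbasis}) and hence lies in $Q$, forcing $f=0$. Your explicit check of which family of $\mcO_k$ absorbs each monomial, and your remark that $\ip{z^nw^k}{a_k}\ne 0$ underlies \eqref{dualbasis}, are just slightly more detailed versions of what the paper leaves to Remark \ref{akremark}.
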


\begin{proof}
Since the $a_k$ are polynomials of degree at most $m-1$ in $w$, it is
clear that
\[
\clspan\{z^i a_j(z,w): 0 \leq i, 0\leq j < m\} \subset \clspan\{z^i
w^j: 0 \leq i, 0\leq j < m\}.
\]
By Theorem \ref{mainprop}, the $a_k$ are orthogonal to the spaces
\[
\clspan \{z^i w^j: i < n, j \geq 0\} \supset \clspan\{z^i
w^j:  i < n, 0\leq j < m\},
\]
and since these spaces are invariant under multiplication by
$\bar{z}$, the polynomials $z^ia_k$ are also orthogonal to these
spaces for all $i\geq 0$.  So,
\[
\clspan\{z^i a_j(z,w): 0 \leq i, 0\leq j < m\} \perp \clspan \{z^i
w^j: i < n, j \geq 0\}.
\]
Therefore,
\begin{align}
&\clspan\{z^k a_j(z,w): 0 \leq k, 0\leq j < m\} \nonumber \\
&\subset \clspan\{z^i w^j: 0 \leq i, 0\leq j < m\} \ominus \clspan\{z^i w^j: 0 \le i < 
     n, 0\leq j < m \} \label{containment}
\end{align}
and this containment must in fact be an equality.  

Indeed, any $f$ in
\[
\clspan\{z^i w^j: 0 \leq i, 0\leq j < m\}
\]
which is orthogonal to $\{z^k a_j(z,w): 0 \leq k, 0\leq j < m\}$
satisfies $\hat{f}(i,j) = 0$ for $i \geq n$ and $0\leq j < m$ by
Remark \ref{akremark} and equation \eqref{dualbasis}.  Such an $f$ cannot also
be orthogonal to the space $\clspan\{z^i w^j: 0 \le i < n, 0\leq j < m
\}$ without being identically zero.
\end{proof}

Define
\[
\begin{aligned}\label{h}
H =& \Span\{z^i w^j: (0,0) \leq (i, j) \leq (n,m-1)\} \\
&\ominus \Span\{z^i w^j: (0,0) \leq (i, j) \leq (n-1,m-1)\}.
\end{aligned}
\]
Define also the reflection $\tl{H}$
\[
\begin{aligned}\label{hrev}
\tl{H} =& \Span\{z^i w^j: (0,0) \leq (i, j) \leq (n,m-1)\} \\
&\ominus \Span\{z^i w^j: (1,0) \leq (i, j) \leq (n,m-1)\}.
\end{aligned}
\]

\begin{prop}\label{reph}
We have the following orthogonal direct sum decompositions in $L^2(1/|p_{n,m}|^2
d\sigma)$
\begin{equation} \label{spaceH}
\clspan\{z^k a_j(z,w): 0 \leq k, 0\leq j < m\} =
\bigoplus_{i=0}^{\infty} z^i H
\end{equation}
\begin{equation}\label{spaceHrev}
\mcH_1 := \clspan\{z^k w^j: 0 \leq k, 0 \leq j < m\} = \bigoplus_{i=0}^{\infty}
z^i \tl{H}.
\end{equation}
If $K_H$ is the reproducing kernel for $H$ and $K_{\tl{H}}$ is the
reproducing kernel for $\tl{H}$, then the reproducing kernel for the
spaces in \eqref{spaceH} and \eqref{spaceHrev} are given by
\[
\frac{K_H(z,w;z_1,w_1)}{1-z\bar{z}_1} \text{ and }
\frac{K_{\tl{H}} (z,w;z_1,w_1)}{1-z\bar{z}_1}
\]
respectively.
\end{prop}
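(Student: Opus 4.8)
The plan is to obtain both orthogonal decompositions from the Wold decomposition of multiplication by $z$, and then to deduce the reproducing kernel formulas from a general fact about $z$-wandering subspaces. Since $|z|=1$ on $\T^2$, multiplication by $z$ is an isometry on $L^2(1/|p_{n,m}|^2 d\sigma)$, and both $\mcH_1$ and $\mathcal{A}:=\clspan\{z^ka_j(z,w):0\le k,\ 0\le j<m\}$ are invariant under it. As $L^2(1/|p_{n,m}|^2 d\sigma)$ is topologically isomorphic to $L^2(\T^2)$, with the isomorphism sending closed spans of monomial sets to the same sets, we have $\bigcap_{i\ge0}z^i\mcH_1=\{0\}$, hence also $\bigcap_{i\ge0}z^i\mathcal{A}=\{0\}$ since $\mathcal{A}\subset\mcH_1$. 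The Wold decomposition therefore gives $\mcH_1=\bigoplus_{i\ge0}z^iW_1$ and $\mathcal{A}=\bigoplus_{i\ge0}z^iW_2$, where $W_1=\mcH_1\ominus z\mcH_1$ and $W_2=\mathcal{A}\ominus z\mathcal{A}$ are the wandering subspaces; it then remains to identify $W_1=\tl H$ and $W_2=H$.

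For the identification, set $V=\Span\{z^iw^j:(0,0)\le(i,j)\le(n,m-1)\}$ and $P=\Span\{z^iw^j:(0,0)\le(i,j)\le(n-1,m-1)\}$, so that $H=V\ominus P$ and $\tl H=V\ominus zP$, while by Theorem~\ref{GW} one has $\mathcal{A}=\mcH_1\ominus P$, whence $H\subset\mathcal{A}$ and $\mcH_1=P\oplus\mathcal{A}$, so $z\mcH_1=zP\oplus z\mathcal{A}$. The key point is the extra orthogonality $V\perp z\mathcal{A}$: by the duality in Remark~\ref{akremark}, for $0\le i\le n$, $0\le j<m$, $k\ge1$, $0\le l<m$ we have $\ip{z^iw^j}{z^ka_l}=\ip{z^{(i-n)+n}w^j}{z^ka_l}=0$ because $i-n\le 0<k$. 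Consequently $H\subset V\perp z\mathcal{A}$ and $H\subset\mathcal{A}$, so $H\subset W_2$; and $\tl H\perp zP$ by definition while $\tl H\subset V\perp z\mathcal{A}$, so $\tl H\perp z\mcH_1$ and $\tl H\subset W_1$. For the reverse inclusions I use a dimension count: since $z^ka_j\in z\mathcal{A}\perp W_2$ for $k\ge1$, the projection of $\mathcal{A}$ onto $W_2$ is the (finite-dimensional) span of $\{P_{W_2}a_j:0\le j<m\}$, so $\dim W_2\le m$; likewise $\dim W_1\le m$ using the generators $w^0,\dots,w^{m-1}$ of $\mcH_1$. Since $\dim H=\dim\tl H=\dim V-\dim P=m$, we conclude $W_2=H$ and $W_1=\tl H$, which is \eqref{spaceH} and \eqref{spaceHrev}.

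For the reproducing kernels I would record the general principle: if $\mathcal{M}=\bigoplus_{i\ge0}z^iW$ is an orthogonal sum with multiplication by $z$ isometric and $W$ a reproducing kernel space with kernel $K_W$, then $\mathcal{M}$ has reproducing kernel $K_W(z,w;z_1,w_1)/(1-z\bar z_1)$. Indeed $W$ is a finite-dimensional space of polynomials, so $K_W(\cdot,\cdot;z_1,w_1)\in W$ is a polynomial in $(z,w)$; for $|z_1|<1$ the series $\sum_{i\ge0}\bar z_1^{\,i}z^iK_W(\cdot,\cdot;z_1,w_1)$ converges in $\mathcal{M}$ (as $\|z^iK_W\|=\|K_W\|$ and $\sum|z_1|^{2i}<\infty$) with sum $K_W(z,w;z_1,w_1)/(1-z\bar z_1)$; and writing $f=\sum_i z^if_i$ with $f_i\in W$, the relations $z^iW\perp z^jW$ for $i\ne j$ and the isometry of multiplication by $z$ give $\ip{f}{\sum_i\bar z_1^{\,i}z^iK_W(\cdot,\cdot;z_1,w_1)}=\sum_i z_1^i\ip{f_i}{K_W(\cdot,\cdot;z_1,w_1)}=\sum_i z_1^i f_i(z_1,w_1)=f(z_1,w_1)$. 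Taking $W=H$ and $W=\tl H$ completes the proof.

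The inclusions $H\subset W_2$ and $\tl H\subset W_1$ are routine; the substantive point, and the likeliest source of trouble, is the orthogonality $V\perp z\mathcal{A}$ — equivalently, that $\tl H$ is orthogonal to $z^kw^j$ for \emph{every} $k\ge1$ and not merely for $1\le k\le n$ as its definition provides. This rests on the duality \eqref{dualbasis} of Remark~\ref{akremark}, hence ultimately on Theorem~\ref{mainprop}; the matching dimension bound $\dim W_i\le m$ is then what turns the inclusions into equalities.
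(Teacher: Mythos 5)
Your proof is correct, and it reaches the conclusion by a genuinely different organization than the paper's. The paper works with $H$ directly: it first enlarges the characterization of $H$ to $\clspan\{z^iw^j: i\le n,\ 0\le j<m\}\ominus\clspan\{z^iw^j: i<n,\ 0\le j<m\}$ (indices ranging over all of $\Z$) using the orthogonality $H\perp\clspan\{z^iw^j: i<n,\ j\ge 0\}$ inherited from Theorem~\ref{GW}, reads off $H\perp z^iH$ for $i>0$, and sums the shifts explicitly to match \eqref{GWspace}; for $\tl{H}$ it transports that same orthogonality through the reflection $f\mapsto z^nw^{m-1}\overline{f(1/\bar z,1/\bar w)}$ and then identifies $\tl{H}=\mcH_1\ominus z\mcH_1$ by a dimension count. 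You instead invoke the Wold decomposition of $M_z$ on $\mcH_1$ and on $\mathcal{A}=\clspan\{z^ka_j\}$ and identify both wandering subspaces at once; your substitute for the paper's reflection step is the single orthogonality $V\perp z\mathcal{A}$, extracted from the duality \eqref{dualbasis} of Remark~\ref{akremark}, which simultaneously yields $H\subset\mathcal{A}\ominus z\mathcal{A}$ and (together with $\tl{H}\perp zP$, which holds by definition) $\tl{H}\subset\mcH_1\ominus z\mcH_1$; the matching bounds $\dim(\mathcal{A}\ominus z\mathcal{A})\le m$ and $\dim(\mcH_1\ominus z\mcH_1)\le m$ from the generators then force equality. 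The trade-off: the paper's argument is more hands-on and makes the intermediate spaces explicit, while yours isolates the one nontrivial orthogonality, treats the two decompositions symmetrically without appealing to the reflection anti-unitarity, and supplies the geometric-series verification of the kernel formula that the paper only cites from \cite{Ka}. Both ultimately rest on the same inputs, namely Theorem~\ref{GW} and the orthogonality relations of Theorem~\ref{mainprop}.
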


\begin{proof}
Now $H$ is an $m$ dimensional space of polynomials contained in the
space \eqref{GWspace} of the previous theorem.  In particular,
\begin{equation} \label{horth}
H \perp \clspan \{z^i w^j: i < n, j \geq 0\},
\end{equation}
and 
\[
H = \clspan\{z^iw^j: i\leq n, 0\leq j < m\} \ominus \clspan\{z^i w^j: i<n,
0\leq j <m\}
\]
since this space is also $m$ dimensional and contains $H$.  From this
it is clear that $H \perp z^iH$ for $i>0$ and we have
\[
\begin{aligned}
\bigoplus_{i=0}^{\infty} z^i H =& \clspan\{ z^iw^j: 0\leq j < m\} \\
&\ominus \clspan\{z^i w^j: i<n, 0 \leq j < m\}.
\end{aligned}
\]
Since shifts of $H$ are contained in $\clspan\{ z^iw^j: 0\leq i, 0\leq
j < m\}$, we must have
\[
\begin{aligned}
\bigoplus_{i=0}^{\infty} z^i H =& \clspan\{ z^iw^j: 0\leq i, 0\leq j < m\} \\
&\ominus \clspan\{z^i w^j: 0\leq i< n, 0 \leq j < m\}
\end{aligned}
\]
which combined with \eqref{GWspace} gives \eqref{spaceH}.

Next, $\tl{H}$ is also $m$ dimensional and by \eqref{horth} is
orthogonal to
\[
\{z^iw^j: i > 0; j < m\}
\]
which in particular contains the strip $\{z^i w^j: i > 0; 0 \leq j
< m\} = z \{z^i w^j: i \geq 0; 0 \leq j
< m\}$. So, 
\[
\begin{aligned}
\tl{H} =& \clspan \{ z^iw^j: 0\leq i, 0\leq j < m\} \\ 
&\ominus z\ \clspan \{ z^iw^j: 0\leq i, 0 \leq j < m\}
\end{aligned}
\]
by dimensional considerations.
Therefore,
\[
\mcH_1 = \bigoplus_{j\geq 0} z^j \tl{H}.
\]

The formulas for the reproducing kernels are direct consequences of
the orthogonal decompositions (see \cite{Ka} for more on this).
\end{proof}

\begin{lemma}
In $\bshilb$ the reproducing kernel for 
\[
\mcH = \clspan\{z^i w^j: (0,0) \leq (i, j) \ngeq (n,m)\}
\]
is
\[
\frac{p_{n,m}(z,w)\overline{p_{n,m}(z_1,w_1)} -
  \overleftarrow{p_{n,m}}(z,w)\overline{\overleftarrow{p_{n,m}}(z_1,w_1)}}
     {(1-z\bar z_1)(1-w \bar w_1)}.
\]
\end{lemma}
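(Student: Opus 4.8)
The plan is to verify the two defining properties of a reproducing kernel: that the proposed kernel $k(z,w;z_1,w_1)$ lies in $\mcH$ as a function of $(z,w)$ for each fixed $(z_1,w_1)$, and that $\ip{f}{k(\cdot,\cdot;z_1,w_1)} = f(z_1,w_1)$ for every $f \in \mcH$. Since $\mcH$ is finite dimensional (spanned by the monomials $z^i w^j$ with $(i,j) \leq (n,m)$ but $(i,j) \ngeq (n,m)$, i.e. all monomials in the $(n,m)$-box except $z^n w^m$ itself), it suffices to check the reproducing property against these spanning monomials.

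First I would observe that the numerator $p_{n,m}(z,w)\overline{p_{n,m}(z_1,w_1)} - \tl{p}_{n,m}(z,w)\overline{\tl{p}_{n,m}(z_1,w_1)}$, divided by $(1-z\bar z_1)(1-w\bar w_1)$, is genuinely a polynomial in $(z,w)$ of degree at most $(n,m)$ in which the top coefficient of $z^n w^m$ cancels: the $z^n w^m$ term of $p_{n,m}(z,w)\overline{p_{n,m}(z_1,w_1)}$ is the leading coefficient of $p_{n,m}$ times $\overline{p_{n,m}(z_1,w_1)}$, and the $z^n w^m$ term of $\tl{p}_{n,m}(z,w)\overline{\tl{p}_{n,m}(z_1,w_1)}$ is (by the definition of $\tl{p}_{n,m}$) the conjugate of the constant term of $p_{n,m}$ times $\overline{\tl{p}_{n,m}(z_1,w_1)}$; using $\tl{p}_{n,m}(z_1,w_1) = z_1^n w_1^m \overline{p_{n,m}(1/\bar z_1, 1/\bar w_1)}$ one checks these agree, so $k \in \mcH$. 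This is essentially the bivariate analogue of the one-variable Christoffel--Darboux identity and I expect it to be the same computation already used implicitly around \eqref{christo}; one can alternatively cite the Fej\'er--Riesz/Geronimo--Woerdeman machinery \cite{GWa}.

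For the reproducing property, I would compute $\ip{z^i w^j}{k(\cdot,\cdot;z_1,w_1)}$ directly from the integral. Write $k$ with the factor $(1-z\bar z_1)^{-1}(1-w\bar w_1)^{-1}$ expanded as a geometric series $\sum_{a,b\geq 0} \bar z_1^a \bar w_1^b z^a w^b$ (valid since we integrate against the bounded weight after the polynomial numerator is in place — more cleanly, keep $k$ as the polynomial it is and pair termwise). The key input is Lemma \ref{porth}: $p_{n,m} \perp \{z^i w^j : (i,j) \nleq (0,0)\}$ and $\tl{p}_{n,m} \perp \{z^i w^j : (i,j) \ngeq (n,m)\}$ in $\bshilb$. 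Pairing $z^i w^j$ (with $(i,j)$ in the $(n,m)$-box) against the $p_{n,m}(z,w)\overline{p_{n,m}(z_1,w_1)}$ piece: using $\ip{z^iw^j}{p_{n,m}} = \int z^iw^j/p_{n,m}\, d\sigma$ and the mean value property one gets the truncated Taylor coefficients of $1/p_{n,m}$ at the origin, while the $\tl{p}_{n,m}$ piece contributes the complementary tail; the geometric-series factor reorganizes these into exactly $f(z_1,w_1)$ after the telescoping. This is the structural heart of the argument and mirrors how the $H$ and $\tl H$ kernels were assembled in Proposition \ref{reph}: there the quotient by $(1-z\bar z_1)$ came from summing a shift-invariant orthogonal decomposition, and here the two quotient factors come from the two-variable analogue applied to $\mcH$.

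The main obstacle is bookkeeping rather than conceptual: keeping track of which monomials survive the orthogonality relations of Lemma \ref{porth} when multiplied by the geometric-series tails, and confirming that the cross terms (the ones where neither $p_{n,m}$ nor $\tl p_{n,m}$ is automatically annihilated) recombine correctly. A cleaner route that sidesteps most of this is to use the orthogonal decomposition viewpoint: note $\mcH = \mcH_1^{(n,m)} \cap \{w\text{-degree} < m\}$-type slices, or more directly observe that $\mcH$ is the orthogonal complement, inside $\clspan\{z^iw^j:(i,j)\leq(n,m)\}$, of the one-dimensional span of the reproducing kernel's "missing" direction; then the formula follows by recognizing the right-hand side as $K - (\text{rank one correction})$ where $K$ is the (trivial, since all monomials are present) reproducing kernel of the full box. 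I would present the direct computation as the primary proof and remark on this shortcut.
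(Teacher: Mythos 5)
There is a genuine gap, and it starts at the very first step: you have misread the space $\mcH$. The condition $(0,0) \leq (i,j) \ngeq (n,m)$ means $i,j \geq 0$ together with the \emph{negation} of $(i,j) \geq (n,m)$, i.e.\ $i < n$ \emph{or} $j < m$. So the index set is all of $\{(i,j): i,j \geq 0\}$ with the quadrant $\{i \geq n,\ j \geq m\}$ removed --- an infinite set --- not the finite box $(i,j) \leq (n,m)$ minus the corner monomial $z^nw^m$. (Note the paper writes $\clspan$, a closed span.) For instance $z^{n+1}$ and $w^{m+1}$ both lie in $\mcH$. Everything downstream of the finite-dimensionality claim, including the closing ``rank-one correction of the full box kernel'' shortcut, therefore does not apply. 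A second, independent error: the quotient
\[
\frac{p_{n,m}(z,w)\overline{p_{n,m}(z_1,w_1)} -
  \tl{p}_{n,m}(z,w)\overline{\tl{p}_{n,m}(z_1,w_1)}}{(1-z\bar z_1)(1-w \bar w_1)}
\]
is \emph{not} a polynomial in $(z,w)$. The numerator vanishes only on the codimension-two set $\{z\bar z_1 = 1\} \cap \{w\bar w_1 = 1\}$, which does not give divisibility by the product of the two linear factors. Indeed, the whole point of Theorem \ref{cd} is that the numerator decomposes as $(1-w\bar w_1)K_1 + (1-z\bar z_1)K_2$, so the quotient is a sum of two terms each retaining one denominator; if it were a polynomial the Christoffel--Darboux formula would be nearly vacuous.

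The paper's actual argument avoids both issues. It first shows, via the Cauchy integral formula, that $p_{n,m}(z,w)\overline{p_{n,m}(z_1,w_1)}/\bigl((1-z\bar z_1)(1-w\bar w_1)\bigr)$ reproduces the full space $\clspan\{z^iw^j : (i,j)\geq(0,0)\}$ (this works because $f/p_{n,m}$ is holomorphic on $\overline{\D^2}$ for $f$ in that span). It then shows $\{z^iw^j\tl{p}_{n,m} : i,j\geq 0\}$ is an orthonormal basis of the orthogonal complement of $\mcH$ inside that full space --- membership comes from Lemma \ref{porth}, orthonormality is immediate, and completeness is an induction on the partial order using that $\tl{p}_{n,m}$ has nonzero leading coefficient $c z^nw^m$. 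Summing the kernel contributions of that basis gives $\tl{p}_{n,m}(z,w)\overline{\tl{p}_{n,m}(z_1,w_1)}/\bigl((1-z\bar z_1)(1-w\bar w_1)\bigr)$ as the kernel of the complement, and the kernel of $\mcH$ is the difference. If you want to salvage your outline, the completeness step is the part you would need to supply: checking the reproducing property monomial by monomial against an infinite spanning set is exactly the bookkeeping you flagged as ``the main obstacle,'' and the orthonormal-basis argument is the device that makes it tractable.
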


\begin{proof}
First,
\[
K(z,w;z_1,w_1) = K_{(z_1,w_1)}(z,w) = \frac{p_{n,m}(z,w)\overline{p_{n,m}(z_1,w_1)}}
{(1-z\bar z_1)(1-w \bar w_1)}
\]
is the reproducing kernel for $\clspan\{z^iw^j: (0,0)\leq (i,j)\}$
since
\[
\begin{aligned}
\ip{f}{K_{(z_1,w_1)}} &= \int_{\T^2} \frac{f(z,w)}{p_{n,m}(z,w)}
p_{n,m}(z_1,w_1) \frac{dz dw }{(2\pi i)^2 zw(1-\bar{z} z_1)(1-\bar{w}
  w_1)} \\
&= \frac{f(z_1,w_1)}{p_{n,m}(z_1,w_1)} p_{n,m}(z_1,w_1) = f(z_1,w_1)
\end{aligned}
\]
by the Cauchy integral formula.  On the other hand, 
\begin{equation} \label{prevkernel}
\frac{ \tl{p}_{n,m}(z,w)\overline{\overleftarrow{p_{n,m}}(z_1,w_1)}}
     {(1-z\bar z_1)(1-w \bar w_1)}
\end{equation}
is the reproducing kernel for
\begin{equation} \label{prevspace}
\clspan\{z^iw^j: (0,0)\leq (i,j)\} \ominus \mcH.
\end{equation}
To see this it is enough to show that $\{z^iw^j \tl{p}_{n,m}:
(0,0)\leq (i,j)\}$ is an orthonormal basis for the space
\eqref{prevspace}.  By Lemma \ref{porth}, $z^iw^j \tl{p}_{n,m}$ is in
the space in \eqref{prevspace} for every $i,j\geq 0$ and it is easy to
check that these polynomials form an orthonormal set.  We show that
their span is dense.  

We may write $\tl{p}_{n,m} = c z^n w^m + \text{lower order terms}$
with $c\ne 0$, since $p_{n,m}$ is stable.  Now, let $f$ be in the
space in \eqref{prevspace}.  If $f \perp \tl{p}_{n,m} = cz^nw^m +
\text{lower order terms}$, then since $f$ is already orthogonal to the
``lower order terms'' we see that $f \perp z^nw^m$.  Inductively,
then, we see that assuming $f \perp z^iw^j$ for all $i \leq N$ and
$j\leq M$ but $(i,j)\ne (N,M)$ and assuming $f \perp z^Nw^M \tl{p}_{n,m}$,
we automatically get $f \perp z^Nw^M$ since $f$ will be orthogonal to
the lower order terms in $z^Nw^M\tl{p}_{n,m}$.  Therefore, if $f$ in
\eqref{prevspace} is orthogonal to $\{z^iw^j\tl{p}_{n,m}: i,j\geq 0\}$
there can be no minimal $(i,j)\geq (n,m)$ (in the partial order on
pairs) such that $f$ is not orthogonal to $z^iw^j$. In particular, $f
\perp z^iw^j$ for all $i\geq n$ and $j\geq m$ and by \eqref{prevspace}
$f \perp \mcH$, which forces $f \equiv 0$.

So, $\{z^iw^j\tl{p}_{n,m}: (0,0)\leq (i,j)\}$ is an orthonormal basis
for the space in \eqref{prevspace} while  \eqref{prevkernel} is the
reproducing kernel for this space.

Finally, the reproducing kernel for 
\[
\mcH = \clspan\{z^iw^j: (0,0)\leq (i,j)\} \ominus (\clspan\{z^iw^j:
(0,0)\leq (i,j)\}\ominus \mcH)
\]
is the difference of the reproducing kernels we have just
calculated. Namely,
\[
\frac{p_{n,m}(z,w)\overline{p_{n,m}(z_1,w_1)} -
  \overleftarrow{p_{n,m}}(z,w)\overline{\overleftarrow{p_{n,m}}(z_1,w_1)}}
     {(1-z\bar z_1)(1-w \bar w_1)}.
\]
\end{proof}

\section{The bivariate Christoffel-Darboux formula}

Set 
\begin{align*}
H_1 =& \Span\{z^i w^j: 0\leq i \leq n, 0 \leq j \leq m-1\} \\
&\ominus \Span\{z^i w^j: 0\leq i \leq n-1, 0 \leq j \leq m-1\}
\end{align*}
and
\begin{align*}
\tl{H}_2=&\Span \{ z^iw^j: 0\le i\le n-1, 0\le j\le m\}\\&\ominus \Span \{ z^iw^j, 0\le
i\le n-1, 1\le j\le m\}.
\end{align*}

The two variable Christoffel-Darboux formula is the following.

\begin{theorem}\label{cd}
 Let $p_{n,m}$ be a stable polynomial. Let $K_1$ be the reproducing
 kernel for $H_1$ and let $K_2$ be the reproducing kernel for
 $\tl{H}_2$.  Then
\begin{align*}
&p_{n,m}(z,w)\overline{p_{n,m}(z_1,w_1)} -
  \overleftarrow{p_{n,m}}(z,w)\overline{\overleftarrow{p_{n,m}}(z_1,w_1)}\\&=(1-w\bar
  w_1)K_1(z,w;z_1,w_1)+(1-z\bar z_1)K_2(z,w;z_1,w_1),
\end{align*}
\end{theorem}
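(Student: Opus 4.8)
The plan is to identify the two reproducing kernels appearing in the statement with reproducing kernels of subspaces that arise naturally from the orthogonal decompositions already established, and then to recognize the displayed identity as an instance of the kernel-of-$\mcH$ formula from the last lemma of Section~4. Concretely, I would begin by observing that $\mcH = \clspan\{z^iw^j : (0,0)\le(i,j)\ngeq(n,m)\}$ decomposes in two ways: first as $\bigoplus_{j\ge 0} z^j\tl H_2 \,\oplus\, (\text{something involving }H)$, and second as $\bigoplus_{i\ge 0} z^i \tl H \,\oplus\, (\text{a shift of the }w\text{-direction analog})$. The key is that $\mcH$ can be split along the $w$-variable into the part with $0\le j\le m-1$ (all $i\ge 0$), which is exactly $\mcH_1$ from Proposition~\ref{reph}, and the part with $j\ge m$ and $0\le i\le n-1$; symmetrically it splits along $z$ into the $0\le i\le n-1$ part (all $j\ge 0$) and the $i\ge n$, $0\le j\le m-1$ part.

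Next I would compute each of these pieces' reproducing kernels using the machinery of Proposition~\ref{reph} and its proof. The half-plane-type summand $\mcH_1 = \bigoplus_{i\ge 0} z^i\tl H$ has reproducing kernel $K_{\tl H}(z,w;z_1,w_1)/(1-z\bar z_1)$; by the $z\leftrightarrow w$ symmetric version of the argument, the space $\clspan\{z^iw^j: 0\le i, 0\le j\}\ominus\mcH_1$-type complement (the $j\ge m$, $0\le i\le n-1$ strip, summed over $w$-shifts) has reproducing kernel $K_{2}(z,w;z_1,w_1)/(1-w\bar w_1)$ where $K_2$ is the kernel of $\tl H_2$. Dually, splitting along $z$ gives a summand with kernel $K_1(z,w;z_1,w_1)/(1-w\bar w_1)$ for $H_1$ over $w$-shifts and a complementary piece with kernel involving $1/(1-z\bar z_1)$. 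I would then show $K_{\tl H}/(1-z\bar z_1)$ equals the kernel of one of the two $\mcH$-summands and that adding the complementary kernel recovers the full kernel of $\mcH$; matching this against the closed form $\dfrac{p_{n,m}(z,w)\overline{p_{n,m}(z_1,w_1)} - \tl p_{n,m}(z,w)\overline{\tl p_{n,m}(z_1,w_1)}}{(1-z\bar z_1)(1-w\bar w_1)}$ from the lemma, and clearing the denominator $(1-z\bar z_1)(1-w\bar w_1)$, yields precisely
\[
p_{n,m}\overline{p_{n,m}} - \tl p_{n,m}\overline{\tl p_{n,m}} = (1-w\bar w_1)K_1 + (1-z\bar z_1)K_2.
\]

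The main obstacle I anticipate is bookkeeping the two distinct orthogonal decompositions of $\mcH$ and checking that the two ``diagonal'' summands one peels off (the $z$-shifts of $\tl H$ and the $w$-shifts of $\tl H_2$, say) are genuinely orthogonal and together exhaust $\mcH$ — i.e. that there is no leftover corner. The danger is a double-counted or missing ``corner'' block near $(i,j)\approx(n,m)$; resolving it requires carefully writing $\mcH$ as the disjoint union of monomial index sets $\{0\le i\le n-1,\, j\ge 0\}$ and $\{i\ge n,\, 0\le j\le m-1\}$, noting these are exactly the index sets underlying the $z$-shifted and $w$-shifted towers, and invoking Theorem~\ref{GW} (and its $w$-analog) to get the span identities. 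Once the index combinatorics is pinned down, the kernel identity is a formal consequence of additivity of reproducing kernels over orthogonal direct sums, and the final formula drops out after multiplying through by $(1-z\bar z_1)(1-w\bar w_1)$.
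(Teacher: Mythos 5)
Your proposal is correct and follows essentially the same route as the paper: decompose $\mcH$ orthogonally into the $w$-shifted tower $\bigoplus_{j\ge0} w^j\tl{H}_2 = \mcH_2$ (kernel $K_2/(1-w\bar w_1)$, by the $z\leftrightarrow w$ version of Proposition~\ref{reph}) and its complement $\mcH\ominus\mcH_2 = \bigoplus_{j\ge0} z^jH_1$ (kernel $K_1/(1-z\bar z_1)$, identified via Theorem~\ref{GW} and the a fortiori argument), then equate the sum with the closed-form kernel of $\mcH$ and clear denominators. The only slip is in your middle paragraph, where you momentarily attach $K_2/(1-w\bar w_1)$ to the strip $\{0\le i\le n-1,\ j\ge m\}$ rather than to the full half-strip $\mcH_2=\clspan\{z^iw^j: 0\le i<n,\ 0\le j\}$, but your final index-set bookkeeping and the resulting formula coincide with the paper's.
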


\begin{proof}
Set 
\[
\begin{aligned}
\mcH &= \clspan\{z^i w^j: (0,0) \leq (i, j) \ngeq (n,m)\} \\
\mcH_1 &= \clspan\{z^i w^j: 0 \leq i , 0\leq j<m\} \\
\mcH_2 &=  \clspan\{z^i w^j: 0 \leq i<n, 0 \leq j\}
\end{aligned}
\]
and notice that $\mcH_1$ and $\mcH_2$ together span $\mcH$.

Theorem \ref{GW} says
\begin{align}
&\clspan\{z^i a_j(z,w): 0 \leq i, 0\leq j < m\} \\ &= \mcH_1
  \ominus (\mcH_1 \cap \mcH_2) \subset \mcH \ominus
  \mcH_2\label{thmrewritten}
\end{align}
which a fortiori implies
\[
\mcH_1 \ominus (\mcH_1 \cap \mcH_2) = \mcH \ominus \mcH_2.
\]
To see this, suppose $f \in (\mcH \ominus \mcH_2 ) \ominus (\mcH_1
\ominus (\mcH_1 \cap \mcH_2))$.  Then, $f \in \mcH \ominus \mcH_1$.
As $\mcH_1$ and $\mcH_2$ span $\mcH$, such an $f$ must be orthogonal
to all of $\mcH$ and must equal $0$. 

The reproducing kernel for the space 
\[
\mcH \ominus \mcH_2 = \bigoplus_{j\geq 0} z^j H_1 \text{ is } \frac{K_1(z,w;z_1,w_1)}{1-z \bar{z}_1}
\]
from Proposition~\ref{reph}.

If we interchange the roles of $z$ and $w$ in Proposition \ref{reph}
we see that
\[
\frac{K_2(z,w;z_1,w_1)}{1-w\bar{w}_1}
\]
is the reproducing kernel for $\mcH_2$. 

Finally, the reproducing kernel for $\mcH = \mcH_2 \oplus (\mcH\ominus
\mcH_2)$ can be written in two ways.  On the one hand it equals
\[
\frac{p_{n,m}(z,w)\overline{p_{n,m}(z_1,w_1)} -
  \overleftarrow{p_{n,m}}(z,w)\overline{\overleftarrow{p_{n,m}}(z_1,w_1)}}{(1-z\bar
  z_1)(1-w \bar w_1)},
\] 
but on the other it equals
\[
\frac{K_2(z,w;z_1,w_1)}{1-w\bar{w}_1} + \frac{K_1(z,w;z_1,w_1)}{1-z\bar{z}_1}
\]
by the discussion above.  Equating these formulas and multiplying
through by $(1-z\bar{z}_1)(1-w\bar{w}_1)$, yields the desired formula.
\end{proof}

\section{Parametric orthogonal polynomials}

The above results also shed light on the parametric orthogonal
polynomials.   The following proposition shows that the inner products
of $a_0,\dots, a_{m-1}$ with respect to $L^2(d\mu^{\theta}, \T)$ for
the measures parametrized by $z=e^{i\theta} \in \T$
\begin{equation}\label{dmuz}
d\mu^{\theta}(w) = \frac{|d w|}{2\pi |p_{n,m}(e^{i\theta},w)|^2} 
\end{equation}
are \emph{trigonometric polynomials} in $z$. 

\begin{prop} \label{prop:Lnorm}
For fixed $z \in \T$
\begin{equation} \label{Lnorm}
\int_{\T} |L(z,w;\eta)|^2 \frac{|dw|}{2\pi |p_{n,m}(z,w)|^2} = \bar{z}^n L(z,\eta;\eta)
\end{equation}
and as a consequence
\begin{equation} \label{ainnerproducts}
\int_{\T} \overline{a_i(z,w)} a_j(z,w) \frac{|dw|}{2\pi |p_{n,m}(z,w)|^2} =
T_{i,j}(z).
\end{equation}
\end{prop}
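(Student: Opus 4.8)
The plan is to show that, for each fixed $z=e^{i\theta}\in\T$, the polynomial $\bar z^{\,n}L(z,\cdot\,;\cdot)$ is the reproducing kernel, inside the one-variable space $L^{2}(d\mu^{\theta})$, of the space of polynomials in $w$ of degree at most $m-1$; equivalently, $L$ restricted to $\{|z|=1\}$ is $z^{n}$ times a one-variable Christoffel--Darboux kernel for the Bernstein--Szeg\H{o} weight $1/|p_{n,m}(z,\cdot)|^{2}$. Granting this, \eqref{Lnorm} is the elementary reproducing-kernel identity $\|K_{\eta}\|^{2}=K(\eta,\eta)$ applied to this kernel, the vector $L(z,\cdot\,;\eta)$ being legitimately in the space because it has degree $\le m-1$ in $w$ by Lemma~\ref{chrisdar}(1); and \eqref{ainnerproducts} then drops out by comparing coefficients.

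I would begin by fixing $z=e^{i\theta}$ and writing $q(w)=p_{n,m}(z,w)$. By stability $q$ has no zeros on $\overline{\D}$, so $1/q$ is holomorphic on a neighbourhood of $\overline{\D}$, and on the torus the reflection identities $\tl{p}_{n,m}(z,w)=z^{n}w^{m}\overline{q(w)}$ and $\overline{p_{n,m}(1/\bar z,\eta)}=\overline{q(\eta)}$ hold; in particular $|q(w)|^{2}=q(w)\overline{q(w)}=\bar z^{\,n}\bar w^{\,m}q(w)\,\tl{p}_{n,m}(z,w)$ on $|w|=1$. Substituting these into the definition \eqref{christo} of $L$ lets one compute $\overline{L(z,w;\eta)}\,/\,|q(w)|^{2}$ explicitly on $\T$ as $\bar z^{\,n}$ times a difference of two rational kernels in $w$.

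The main point is the reproducing property: for every polynomial $f$ in $w$ with $\deg f\le m-1$ and every $\eta\in\C$,
\[
\int_{\T} f(w)\,\overline{L(z,w;\eta)}\,\frac{|dw|}{2\pi\,|p_{n,m}(z,w)|^{2}}=\bar z^{\,n}f(\eta).
\]
Since the left-hand side is a polynomial in $\eta$, it is enough to verify this for $|\eta|<1$. Using the splitting from the previous step, one of the two kernels, integrated against $f$, yields $f(\eta)$ by the Cauchy integral formula --- this is where holomorphy of $1/q$ on $\overline{\D}$ enters --- while the other integrates to $0$: expanded on $\T$ it involves only powers $w^{-1},w^{-2},\dots$ as soon as $\deg f\le m-1$, so its mean value vanishes. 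This vanishing is exactly the ``extra'' cancellation (the one-variable shadow of the degree drop in Lemma~\ref{chrisdar}(1)); keeping track of the factors $z^{\,n},\bar z^{\,n},w^{m},\bar w^{\,m}$ together with the degree bound $\deg f\le m-1$ is the step I expect to be the main obstacle, everything else being formal once this identity is in hand.

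Finally, \eqref{Lnorm} follows by taking $f(w)=L(z,w;\eta)$ in the reproducing property and then setting the two occurrences of $\eta$ equal. For \eqref{ainnerproducts}, write $L(z,w;\eta)=\sum_{j}a_{j}(z,w)\bar\eta^{\,j}$, so that the left-hand side of \eqref{Lnorm} equals $\sum_{i,j}\big(\int_{\T}\overline{a_{i}}\,a_{j}\,d\mu^{\theta}\big)\eta^{i}\bar\eta^{\,j}$, while the matrix form $L(z,w;\eta)=z^{n}[1,\ldots,w^{m-1}]\,T_{m}(z)\,[1,\ldots,\eta^{m-1}]^{\dag}$ in \eqref{christo} gives $\bar z^{\,n}L(z,\eta;\eta)=\sum_{i,j}T_{i,j}(z)\,\eta^{i}\bar\eta^{\,j}$. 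Equating these and using that the functions $\eta^{i}\bar\eta^{\,j}$, $0\le i,j\le m-1$, are linearly independent on $\C$, one reads off $\int_{\T}\overline{a_{i}(z,w)}\,a_{j}(z,w)\,|p_{n,m}(z,w)|^{-2}\,|dw|/(2\pi)=T_{i,j}(z)$. (Alternatively, \eqref{ainnerproducts} can be obtained more directly by applying the reproducing property to $f=a_{j}$.)
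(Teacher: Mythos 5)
Your proposal is correct and follows essentially the same route as the paper: recognize that for $|z|=1$ the function $\bar z^{\,n}L(z,\cdot\,;\eta)$ is the one-variable Christoffel--Darboux (reproducing) kernel for polynomials of degree at most $m-1$ in $L^2(d\mu^\theta)$, apply the general identity $\|K_\eta\|^2=K(\eta,\eta)$ to get \eqref{Lnorm}, and match coefficients of $\eta^i\bar\eta^{\,j}$ against the quadratic form $[1,\ldots,\eta^{m-1}]T_m(z)[1,\ldots,\eta^{m-1}]^{\dag}$ to get \eqref{ainnerproducts}. The only difference is that you sketch a direct Cauchy-integral verification of the one-variable reproducing property, whereas the paper simply cites it as a standard consequence of the classical Christoffel--Darboux formula.
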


\begin{proof} 
For $z \in \T$ the expression
\[
\bar{z}^n L(z,w;\eta) = \frac{p_{n,m}(z,w)\overline{p_{n,m}(z,\eta)} -
  \tl{p}_{n,m}(z,w)\overline{\tl{p}_{n,m}(z,\eta)}}{1-w\bar{\eta}}
\]
is the reproducing kernel/Christoffel-Darboux kernel for polynomials in $w$ of degree at most
$m-1$ with respect to the measure
$|dw|/(2\pi|p_{n,m}(z,w)|^2)$. Indeed, this is one of the
main consequences of the Christoffel-Darboux formula in one variable
(see \cite{hL87} equation (34) or \cite{bS05} Theorem 2.2.7).  It is a
general fact about reproducing kernels $K(w,\eta)=K_{\eta}(w)$ that
\[
||K(\cdot, \eta)||^2 = \ip{K_{\eta}}{K_{\eta}} = K(\eta,\eta).
\]
Using these two observations, \eqref{Lnorm} follows.  Equation
\eqref{ainnerproducts} follows from matching the coefficients of
$\eta^i\bar\eta^j$ in \eqref{Lnorm}.
\end{proof}

%% Let 
%% \[
%% c_k(\theta)=\int_{\T} w^{-k} d\mu^{\theta}(w).
%% \] 
Given $T_m(z)$ defined in equation~\eqref{gohsme}, set
$D_i(\theta)$ as the determinant of the $i\times i$ submatrix of
$T_m(e^{i\theta})$ obtained by keeping the first $i$ rows and columns
and set $D_0= 1$ . We now perform the LU decomposition of $T_m$ which
because it is positive definite does not require any pivoting. Set
\begin{equation}\label{paraop}
[\phi^{\theta}_{m-1}(w),\ldots, \phi^{\theta}_0(w)]^T =U(\theta)[w^{m-1},\ldots, 1]^T,
\end{equation}
where $U(\theta)$ is the upper triangular factor obtained from the LU
decomposition of $T_m$ without pivoting. We find:
\begin{prop}\label{pop}
Suppose $p_{n,m}$ is a stable polynomial then
$\{\phi^{\theta}_i(w)\}_{i=0}^{m-1}$ satisfy the relations
\begin{itemize}
 \item $\phi^{\theta}_i(w)$ is a polynomial in $w$ of degree $i$ with
   leading coefficient, $\frac{D_{m-i}(\theta)}{D_{m-i+1}(\theta)}$,
\item $\int_{\T} \phi^{\theta}_i(w)\overline{\phi^{\theta}_j(w)}
  d\mu^{\theta}(w)=\delta_{i,j}\frac{D_{m-i}(\theta)}{D_{m-i+1}(\theta)}$,
\end{itemize}
which uniquely specify the polynomials. The above implies
\[
 \int_{[0,2\pi]^2}e^{i\theta k}
D_{m-j+1}(\theta)\phi^{\theta}_j(e^{i\phi})\overline{\phi^{\theta}_j(e^{i\phi})}\frac{d\theta
d\phi}{(2\pi)^2|p_{n,m}(e^{i\theta},e^{i\phi})|^2}=0,
\ k> n(m-j).
\]
\end{prop}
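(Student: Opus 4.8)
The plan is to collapse the double integral to a single Fourier coefficient in $\theta$ and then read off the vanishing from a degree count on $T_m$. First I would perform the $e^{i\phi}$-integration on the inside: since $\phi^{\theta}_j(e^{i\phi})\overline{\phi^{\theta}_j(e^{i\phi})}=|\phi^{\theta}_j(e^{i\phi})|^2$ and, by \eqref{dmuz}, $\frac{d\phi}{2\pi|p_{n,m}(e^{i\theta},e^{i\phi})|^2}=d\mu^{\theta}(e^{i\phi})$, the inner integral is exactly $\int_{\T}|\phi^{\theta}_j(w)|^2\,d\mu^{\theta}(w)=\|\phi^{\theta}_j\|^2_{L^2(d\mu^{\theta})}$, which by the second relation of Proposition \ref{pop} (its $i=j$ case) equals $D_{m-j}(\theta)/D_{m-j+1}(\theta)$. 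The prefactor $D_{m-j+1}(\theta)$ is precisely the denominator appearing here, so it cancels and the whole expression reduces to
\[
\int_{0}^{2\pi} e^{ik\theta}\,D_{m-j}(\theta)\,\frac{d\theta}{2\pi},
\]
i.e.\ the coefficient of $z^{-k}$ in $D_{m-j}$ regarded as a Laurent polynomial in $z=e^{i\theta}$.

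Next I would bound degrees. It now suffices to show that $D_{m-j}$, as a Laurent polynomial in $z$, is supported on exponents in $\{-n(m-j),\dots,n(m-j)\}$, because then the coefficient of $z^{-k}$ vanishes as soon as $k>n(m-j)$. For this I would return to \eqref{gohsme}: each $p_a$ ($0\le a\le m$) has degree at most $n$, so $\bar p_b(1/z)$ is supported on $\{-n,\dots,0\}$ and $p_a(z)$ on $\{0,\dots,n\}$; hence every entry of $T_m(z)$, being a finite sum of products of the form $p_a(z)\bar p_b(1/z)$ or $\bar p_a(1/z)p_b(z)$, is a Laurent polynomial supported on $\{-n,\dots,n\}$. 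The leading principal $(m-j)\times(m-j)$ minor $D_{m-j}$ is a signed sum of products of $m-j$ such entries, so its support lies in $\{-n(m-j),\dots,n(m-j)\}$. (Equivalently, $z^nT_m(z)$ is a polynomial matrix with entries of degree $\le 2n$, so $z^{n(m-j)}D_{m-j}(z)$ is an honest polynomial of degree $\le 2n(m-j)$.) Combining the two steps gives the stated identity.

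I do not expect a serious obstacle here; the argument is just these two short moves. The one point to watch is the bookkeeping forced by the reversed indexing in the $LU$-factorization \eqref{paraop}: one must be certain that the minor left over after the $e^{i\phi}$-integration is exactly the numerator of $\|\phi^{\theta}_j\|^2_{L^2(d\mu^{\theta})}$, which is guaranteed by the normalization recorded in Proposition \ref{pop}. An alternative would be to compute $T_{i,j}(e^{i\theta})$ from \eqref{ainnerproducts} as a $w$-integral of $\overline{a_i}a_j$ against $d\mu^{\theta}$, but $d\mu^{\theta}$ itself carries a factor $1/|p_{n,m}(e^{i\theta},w)|^2$ that is not a trigonometric polynomial in $\theta$, which obscures the degree count; I would stick with the direct estimate on the entries of $T_m$.
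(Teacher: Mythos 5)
Your handling of the final display is correct and coincides with the paper's argument, just written out in more detail: integrate in $\phi$ first, use the norm relation to replace the inner integral by $D_{m-j}(\theta)/D_{m-j+1}(\theta)$, cancel the prefactor, and then observe that every entry of $T_m(e^{i\theta})$ in \eqref{gohsme} is a Laurent polynomial in $z=e^{i\theta}$ supported on $\{-n,\dots,n\}$, so the principal minor $D_{m-j}$ is supported on $\{-n(m-j),\dots,n(m-j)\}$ and its Fourier coefficient at $-k$ vanishes once $k>n(m-j)$. The paper compresses all of this into the single remark that $z^{n(m-j)}D_{m-j}(\theta)$ is a polynomial in $z$; your explicit degree bookkeeping is a legitimate expansion of that remark.

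The gap is that you prove only the ``the above implies'' clause and never establish the two bulleted items, even though you use the second of them (the identity $\int_{\T}|\phi^{\theta}_j|^2\,d\mu^{\theta}=D_{m-j}(\theta)/D_{m-j+1}(\theta)$) as the first step of your computation; citing ``the second relation of Proposition \ref{pop}'' inside a proof of Proposition \ref{pop} is circular. What is missing is the identification that the paper's proof actually turns on: the matrix $T_m(e^{i\theta})$ defined by the Schur--Cohn construction \eqref{gohsme} is the \emph{inverse} of the $m\times m$ Toeplitz moment matrix of the parametric measure $d\mu^{\theta}$ in \eqref{dmuz} (a Gohberg--Semencul--type identity). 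Once that is in hand, the classical one-variable theory of polynomials orthogonal on the unit circle says that the triangular factor $U(\theta)$ in the pivot-free LU decomposition of the inverse moment matrix produces, via \eqref{paraop}, the orthogonal polynomials for $d\mu^{\theta}$, with leading coefficients and squared norms given by the ratios of consecutive leading principal minors $D_{m-i}(\theta)/D_{m-i+1}(\theta)$; this yields both bullets and their uniqueness. You need to supply (or at least invoke) this step before your reduction of the double integral is legitimate.
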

\begin{proof}
 From the definition of $T_m$ we see that it is the inverse of the
 $m\times m$ moment matrix associated with $d\mu^{\theta}(w)$. The
 first part of the result now follows from the one dimensional theory
 of polynomials orthogonal on the unit circle. The second part follows
 since $z^{n(m-j)}D_{m-j}(\theta)$ is polynomial in $z$.
\end{proof}

\end{document}